\newcommand{\geom}[1]{#1\otimes_k\bar{k}}
\newcommand{\Derc}[1]{D^b_c(\geom{#1})}
\newcommand{\Derm}[1]{D^b_m(#1)}
\newcommand{\Perc}[1]{\Perv_c(\geom{#1})}
\newcommand{\Perm}[1]{\Perv_m(#1)}
\newcommand{\diac}[1]{D_{\Diamond,c}(\geom{#1})}
\newcommand{\diam}[1]{D_{\Diamond,m}(#1)}
\newcommand{\pdiac}[1]{\Perv_{\Diamond,c}(\geom{#1})}
\newcommand{\pdiam}[1]{\Perv_{\Diamond,m}(#1)}
\newcommand{\Tate}[1]{\langle{#1}\rangle}
\newcommand{\leftexp}[2]{{\vphantom{#2}}^{#1}{#2}}
\DeclareMathOperator{\GL}{GL}
\DeclareMathOperator{\Gr}{Gr}
\DeclareMathOperator{\Hom}{Hom}
\DeclareMathOperator{\Ext}{Ext}
\DeclareMathOperator{\End}{End}
\DeclareMathOperator{\Aut}{Aut}
\DeclareMathOperator{\Spec}{Spec}
\DeclareMathOperator{\Perv}{Perv}
\DeclareMathOperator{\id}{id}
\DeclareMathOperator{\codim}{codim}
\DeclareMathOperator{\Frob}{Frob}
\DeclareMathOperator{\Fu}{Frob-unip}
\DeclareMathOperator{\rot}{rot}
\def\AA{\mathbb{A}}
\def\CC{\mathbb{C}}
\def\FF{\mathbb{F}}
\def\GG{\mathbb{G}}
\def\HH{\mathbb{H}}
\def\PP{\mathbb{P}}
\def\QQ{\mathbb{Q}}
\def\ZZ{\mathbb{Z}}
\def\calO{\mathcal{O}}
\def\calA{\mathcal{A}}
\def\calB{\mathcal{B}}
\def\calF{\mathcal{F}}
\def\calH{\mathcal{H}}
\def\calI{\mathcal{I}}
\def\calP{\mathcal{P}}
\def\calL{\mathcal{L}}
\def\calK{\mathcal{K}}
\def\calT{\mathcal{T}}
\def\calQ{\mathcal{Q}}
\def\IC{\mathcal{IC}}
\def\Flag{\mathcal{F}\ell}
\def\Ql{\overline{\mathbb{Q}}_{\ell}}
\def\Qla{\overline{\mathbb{Q}}_{\ell,\alpha}\Tate{\dim X_\alpha}}
\def\coch{\mathbb{X}_\bullet}
\def\tilT{\widetilde{T}}
\def\tilt{\widetilde{t}}
\def\tilu{\widetilde{u}}
\def\tilv{\widetilde{v}}
\def\tilw{\widetilde{w}}
\def\tilW{\widetilde{W}}
\def\tilalpha{\widetilde{\alpha}}
\def\tilIp{\widetilde{I^+}}
\def\tildel{\widetilde{\Delta}}
\def\tilnab{\widetilde{\nabla}}
\theoremstyle{plain}
\newtheorem{theorem}[subsubsection]{Theorem}
\newtheorem{lemma}[subsubsection]{Lemma}
\newtheorem{cor}[subsubsection]{Corollary}
\newtheorem{prop}[subsubsection]{Proposition}
\newtheorem*{warning}{Warning}
\theoremstyle{definition}
\newtheorem{defn}[subsubsection]{Definition}
\newtheorem{ex}[subsubsection]{Example}
\theoremstyle{remark}
\newtheorem*{remark}{Remark}
\numberwithin{equation}{subsection}
\begin{document}

\title{Weights of mixed tilting sheaves and geometric Ringel duality}
\author{Zhiwei Yun}
\address{
Department of Mathematics,\\ 
Princeton University,\\ 
Princeton, NJ 08544,\\
U.S.A.}

\email{zyun@math.princeton.edu}

\subjclass{Primary 14F20; Secondary 14L30, 20G25}

\keywords{Tilting sheaves, affine flag variety, Kazhdan-Lusztig polynomials}

\date{Mar, 2008; Revised Nov, 2008}

\begin{abstract}
We describe several general methods for calculating weights of mixed tilting sheaves. We introduce a notion called ``non-cancellation property'' which implies a strong uniqueness of mixed tilting sheaves and enables one to calculate their weights effectively. When we have a certain Radon transform, we prove a geometric analogue of Ringel duality which sends tilting objects to projective objects. We apply these methods to (partial) flag varieties and affine (partial) flag varieties and show that the weight polynomials of mixed tilting sheaves on flag and affine flag varieties are essentially given by Kazhdan-Lusztig polynomials. This verifies a mixed geometric analogue of a conjecture by W.Soergel in \cite{Sg1}.
\end{abstract}

\maketitle

\section{Introduction}

\subsection{Mixed tilting sheaves}
The goal of the paper is to calculate the weights of mixed tilting sheaves on certain stratified (ind-)schemes. The main examples will be affine flag varieties and their relatives. Let us begin with some general discussion on mixed tilting sheaves. Let $X=\bigsqcup X_\alpha$ be a stratified scheme over $k=\FF_q$. Suppose it satisfies the technical assumption in Section \ref{ss:ass}. {\em Tilting ($\ell$-adic) sheaves} on $X$ are a special kind of perverse sheaves whose restriction and co-restriction to each stratum is a lisse $\Ql$-sheaf placed on the perverse degree. A {\em mixed tilting sheaf} is a mixed $\ell$-adic perverse sheaf which is a tilting sheaf (see Definition \ref{d:mt}).

In the paper \cite{BBM}, the authors proved some fundamental results for tilting sheaves in the non-mixed setting. Suppose the $H^1$ and $H^2$ of each stratum vanish, then for each stratum $X_\alpha$, there exists a tilting sheaf supported on the closure of $X_\alpha$, whose restriction to $X_\alpha$ is the constant perverse sheaf $\overline{\QQ}_{\ell,X_\alpha}[\dim X_\alpha]$ on $X_\alpha$ (such tilting sheaves are called {\em tilting extensions} of the constant perverse sheaf). In Section 1.4 of {\em loc.cit.}, it was shown that among such tilting extensions, there is an indecomposable one which is unique up to (non-unique) isomorphism.

We consider the mixed version of the above statements (under the same assumptions). We show the existence of indecomposable mixed tilting extensions of $\Qla:=\overline{\QQ}_{\ell,X_\alpha}[\dim X_\alpha](\dim X_\alpha/2)$ (Lemma \ref{l:exist}). To obtain a reasonable uniqueness statement, we introduce a notion called ``(weak) non-cancellation property'' (see Definition \ref{d:noncancel}). Roughly speaking a mixed tilting extension $\calT$ of $\Qla$ satisfies this property if the restriction and co-restriction of $\calT$ to each boundary stratum do not have common weights. We will see in Section \ref{s:ex} that indecomposable mixed tilting sheaves on the affine (partial) flag varieties have this property. In Section \ref{ss:nonc}, we obtain a stronger uniqueness statement than in the non-mixed situation: assuming non-cancellation holds for {\em some} indecomposable mixed tilting extension $\calT$ of $\Qla$, then {\em any} indecomposable tilting extension of $\Qla$ is isomorphic to $\calT$, and the isomorphism is unique up to a scalar.

\subsection{Calculation of weights} We will describe three methods for computing the weights of an indecomposable tilting extension $\calT$ of $\Qla$. We collect the punctual weights on each stratum to form weight polynomials (see Section \ref{ss:wp} for definition).

\begin{enumerate}[(1)]
\item (see Section \ref{ss:cal1}) If $\calT$ is Verdier self-dual, then the coefficients of its weight polynomials satisfy a system of triangular linear equations. The non-cancell-\newline ation property of $\calT$ implies a ``non-cancellation property'' of its weight polynomials, which ensures that the solution is unique.

\item (see Section \ref{ss:wpush}) Suppose $f:X\to Y$ is a proper morphism compatible with the stratifications, then, with some extra assumptions, $f_!\calT$ is either zero or a similar mixed tilting extension on $Y$ (Proposition \ref{p:wpush}). We can calculate the weight polynomials of $f_!\calT$ from those of $\calT$. The author learned this idea from R.Bezrukavnikov.

\item (see Section \ref{ss:cal2}) Suppose we have a Radon transform $R_{X\to Y}$ between $X$ and $Y$ (see Section \ref{ss:radon}), we prove that the underlying non-mixed complex of $R_{X\to Y}(\calT)$ is a projective cover of an IC sheaf in a certain subcategory of perverse sheaves on $Y$. We call this phenomenon ``geometric Ringel duality'' (Proposition \ref{p:tiltproj}). From this, we deduce that $\calT$ has the non-cancellation property (Theorem \ref{th:tcan}). Moreover, we can express the weight polynomials of $\calT$ in terms of the mixed stalks of the IC sheaves on $Y$ (Proposition \ref{p:tp}).
\end{enumerate}

The main applications of these methods are to (partial) flag varieties and affine (partial) flag varieties with Schubert stratifications. These varieties are important in geometric representation theory. The case of affine (partial) flag varieties are more complicated because they are infinite dimensional. We construct Radon transforms for these varieties in Section \ref{s:ex} and show that

\begin{theorem}[for precise statement, see Theorem \ref{th:KL}]\label{th:m}
The weight polynomials of the indecomposable mixed tilting sheaves on the flag variety $f\ell_G$ or affine flag variety $\Flag_G$ are essentially given by Kazhdan-Lusztig polynomials.
\end{theorem}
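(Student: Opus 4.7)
The plan is to apply method (3) — the Radon transform together with geometric Ringel duality — developed in Section \ref{ss:cal2}. The key input will be Proposition \ref{p:tp}, which expresses the weight polynomial of an indecomposable mixed tilting extension $\calT$ in terms of the mixed stalks of an IC sheaf on the target of a Radon transform $R_{X\to Y}$. Since the mixed stalks of IC sheaves on (affine) flag varieties along Schubert strata are the geometric content of the Kazhdan--Lusztig conjecture (after Kazhdan--Lusztig, Beilinson--Bernstein, Brylinski--Kashiwara) and carry weight polynomials equal to the Kazhdan--Lusztig polynomials $P_{u,v}(q)$ up to a shift, substituting these values into the formula of Proposition \ref{p:tp} will yield the desired identification.

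The first step is to construct Radon transforms $R_{X\to Y}$ for $X=Y=f\ell_G$ in the finite case and $X=Y=\Flag_G$ in the affine case. In the finite case the transform is realized by push-pull through the double flag variety $G/B\times G/B$, which intertwines the Bruhat stratifications in the manner required by Section \ref{ss:radon}. In the affine case the analogous construction uses the Iwahori subgroup $I$ in place of $B$ and the loop group $LG$ in place of $G$; all push-pull operations happen along finite-type Schubert cells, so with care about the ind-scheme structure the hypotheses of Section \ref{ss:radon} remain verifiable. I would then check that each transform so constructed is a bona fide Radon transform in the technical sense of \emph{loc.\ cit.}, which allows me to invoke the geometric Ringel duality proven in Proposition \ref{p:tiltproj}.

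Next I would apply Proposition \ref{p:tiltproj} to identify the underlying non-mixed complex of $R_{X\to Y}(\calT_w)$ with the projective cover of an IC sheaf $\IC_v$ in the appropriate subcategory of perverse sheaves on $Y$, where $v$ is the Ringel-dual partner of $w$ (multiplication by $w_0$ in the finite case, its natural analogue in the affine case). Proposition \ref{p:tp} then expresses the weight polynomial of $\calT_w$ along any stratum $X_u$ as a combination of the mixed stalks $(\IC_v)|_{Y_{u'}}$ for strata $u'$ determined by the Radon correspondence, together with fixed geometric data (perverse shifts, Tate twists coming from the dimensions of the correspondence). Substituting the Kazhdan--Lusztig values for those mixed stalks delivers Theorem \ref{th:m} in the precise form of Theorem \ref{th:KL}. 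As a bonus, Theorem \ref{th:tcan} supplies the non-cancellation property of $\calT_w$, which in turn (via Section \ref{ss:nonc}) gives the uniqueness of $\calT_w$ needed to make the weight polynomials well-defined invariants.

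The main obstacle is the verification that the push-pull correspondence through $G/B\times G/B$ (or its affine analogue) actually satisfies the axioms of a Radon transform in the sense of Section \ref{ss:radon}, and that the resulting $R_{X\to Y}(\calT_w)$ lands in precisely the subcategory in which projective covers exist and have mixed stalks controlled by KL polynomials — this is the geometric shadow of Ringel duality between the category of tilting modules and the category $\calO$ of projective modules, and matching things up on the nose requires identifying the parabolic/subcategorical structure on both sides. A secondary but non-trivial issue is the infinite-dimensionality of $\Flag_G$: the Radon transform, Ringel duality, and KL-theoretic computation must all be carried out compatibly with finite-type truncations of the ind-scheme, and one must check stabilization as the truncation grows. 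Once these two points are handled, the remainder of the proof is a direct application of the general machinery of Section \ref{s:ex} combined with classical Kazhdan--Lusztig theory.
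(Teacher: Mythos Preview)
Your route is different from the paper's. The paper does \emph{not} feed Kazhdan--Lusztig data through Proposition~\ref{p:tp}; instead it uses method~(1). Concretely: the Radon transform is invoked only to run Theorem~\ref{th:tcan}, establishing that the weight polynomials $W_{\tilv}(\calT_{\tilw},t)$ form a non-cancelling solution of the self-duality equation~(\ref{eq:selfdual}). Separately, the defining property of the $C_{\tilw}$ basis together with positivity and the degree bound on $P_{\tilv,\tilw}$ show that $\{t^{\ell(\tilw)-\ell(\tilv)}P_{\tilv,\tilw}(t^{-2})\}$ is \emph{also} a non-cancelling solution with the same initial value. Proposition~\ref{p:uni} then forces the two solutions to coincide. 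Nothing about projective covers or IC stalks on $Y$ enters the final identification.

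Your approach via Proposition~\ref{p:tp} can be made to work, but as written it has a gap. Proposition~\ref{p:tp} does not express $W_\gamma(\calT_\alpha,t)$ as a ``combination'' of IC stalks into which one can directly substitute $P$-values: it says the tilting weight matrix is the \emph{inverse} of the IC weight matrix on $Y$. So after identifying $W_{\hat\gamma}(\IC_{\hat\alpha},t^{-1})$ with Kazhdan--Lusztig polynomials on the opposite flag variety, you would still need the Kazhdan--Lusztig inversion formula to recognise the inverse matrix entries as $t^{\ell(\tilw)-\ell(\tilv)}P_{\tilv,\tilw}(t^{-2})$; this step is absent from your sketch. Two smaller points: in the paper's indexing the bijection $\alpha\leftrightarrow\hat\alpha$ is the identity, not multiplication by $w_0$; and in the affine case the target $Y$ is the \emph{thick} affine flag variety $G(F)/I^-$ (a scheme of infinite type, handled by truncation to a finite-type quotient $Z$), not another copy of $\Flag_G$. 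The paper's self-duality/uniqueness argument sidesteps all of this by comparing two characterisations of the same unique object in the Hecke algebra rather than computing an explicit matrix inverse.
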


\subsection{Koszul duality}

From the above theorem, we see for $X=f\ell_G$ or $\Flag_G$ with Schubert stratification, and $\calT_\alpha$ an indecomposable tilting extension of $\Qla$ for some Schubert stratum $X_\alpha$, the weights of $\calT_\alpha$ satisfy the following strong estimate (see Section \ref{ss:conW}):

(W) {\em For each $\beta<\alpha$, $i_\beta^*\calT_\alpha$ is a complex of weight $\geq1$ and $i_\beta^!\calT_\alpha$ is a complex of weight $\leq-1$},

\noindent where $i_\beta:X_\beta\hookrightarrow X$ is the inclusion. Condition (W) implies the non-cancellation property. Using the second method mentioned above, we will show that condition (W) also holds for $\calT$ on partial flag varieties and affine partial flag varieties. 

Observe that condition (W) resembles the condition for defining perverse sheaves. In fact, we can define a new $t$-structure on certain derived category of mixed complexes on $X$ whose heart is characterized by the condition (W). The irreducible objects in this heart are precisely indecomposable mixed tilting sheaves. We want to emphasize the parallelism between IC sheaves and indecomposable mixed tilting sheaves. They are both irreducible objects in the hearts of certain $t$-structures. For IC sheaves, the stalks costalks are often pure in weights (in nice cases such as $\Flag_G$) but they sit in various degrees; on the other hand, for indecomposable mixed tilting sheaves, the stalks and costalks of sit in a single degree but do not have pure weights.

Theorem \ref{th:m} and the above observation give numerical evidence for the {\em Koszul duality conjecture} proposed in \cite{B}, Section 1.2. The conjecture states that there is a self-equivalence on a certain mixed version of $D^b(I^0\backslash\Flag_G)$ exchanging IC sheaves and tilting sheaves (here $I^0$ is the pro-unipotent radical of the Iwahori $I$). As we mentioned above, the condition (W) allows us to define a new $t$-structure on the mixed version of $D^b(I^0\backslash\Flag_G)$ which should be the transport of the perverse $t$-structure under the conjectural self-equivalence. In a joint work of R.Bezrukavnikov and the author \cite{BY}, we give a proof of this conjecture, as well as several other forms of the Koszul duality (the equivariant-monodromic duality and parahoric-Whittaker duality), enriching and generalizing the results of \cite{BGS} in a purely geometric way. Therefore, our results in \cite{BY} can be viewed as a categorification of Theorem \ref{th:m}.

\subsection{Related work}

This work is largely inspired by the paper \cite{BBM}. In fact, the Radon transform and Ringel duality for flag varieties were constructed there. In \cite{Na}, D.Nadler described a topological approach to the Ringel duality using Morse theory. The parallel study of tilting modules in the purely representation-theoretic setting was carried out by W.Soergel in \cite{Sg1} and \cite{Sg2}. Theorem \ref{th:m} is a mixed geometric analogue of Conjecture 7.1 in \cite{Sg1}.

\subsection{Notations and convention}
From Section \ref{s:mt} to Section \ref{s:geom}, all schemes are of finite type over a fixed finite field $k=\FF_q$. Let $\bar{k}$ be an algebraic closure of $k$. For a scheme $X$ as above, let $\geom{X}$ denote its geometric fiber. Let $\ell$ be a prime different from $\textup{char}(k)$.

We will consider the following triangulated categories: 

\noindent$\bullet$ $\Derc{X}$ is the bounded derived category of $\Ql$-complexes with constructible cohomology on $\geom{X}$; the heart of the perverse $t$-structure is $\Perc{X}$.

\noindent$\bullet$ $\Derm{X}$ is the bounded derived category of mixed complexes of $\Ql$-sheaves on $X$(cf. Section 5.1 of \cite{BBD}); the heart of the perverse $t$-structure is $\Perm{X}$.

Let $\omega:\Derm{X}\to\Derc{X}$ be the forgetful (or pull-back) functor. For objects $\calF,\calF'\in\Derm{X}$, the hyper-cohomologies 
\begin{equation*}
\HH^*(\geom{X},\omega(\calF)), \HH^*_c(\geom{X},\omega(\calF'))
\end{equation*}
and extension groups 
\begin{equation*}
\Ext^i_{\geom{X}}(\omega(\calF),\omega(\calF')):=\Hom_{\Derc{X}}(\omega(\calF),\omega(\calF')[i])
\end{equation*}
are equipped with natural Frobenius actions. They are NOT to be confused with $\Ext^i_X(\calF,\calF'):=\Hom_{\Derm{X}}(\calF,\calF'[i])$, which is a plain $\Ql$-vector space. Note that we often omit the symbol $\omega$ if no confusion is likely to arise.

All the operations on complexes of sheaves are understood to be derived functors.

We will fix once and for all a square root of $q$ in $\Ql$, hence the half Tate twist $(1/2)$ makes sense. We write $\Tate{n}$ for $[n](n/2)$. By a weight-$w$-twist of a mixed complex $\calF$, we mean the same underlying complex with Frobenius action rescaled by an $\ell$-adic unit $b\in\Ql^{\times}$ with $|\iota(b)|=q^{w/2}$ for any embedding $\iota:\Ql\to\CC$. For $w\in\ZZ$, these are called integer-weight-twists.

For a vector space $V$ with a Frobenius action, we denote the Frobenius invariants and coinvariants by $V^{\Frob}$ and $V_{\Frob}$. We denote by $V^{\Fu}$ the subspace where Frobenius acts unipotently.


\section{Mixed tilting sheaves}\label{s:mt}

\subsection{Assumptions on spaces}\label{ss:ass} By a {\em stratified scheme}, we mean a scheme $X$ of finite type over $k=\FF_q$ with a stratification by {\em connected smooth affine} schemes $X_\alpha$:
\begin{equation*}
X=\bigsqcup_{\alpha\in I}X_\alpha.
\end{equation*}
The finite index set $I$ is partially ordered such that $\alpha\leq\beta$ if and only if $X_\alpha\subset\overline{X_\beta}$.

For each $\alpha\in I$, let $i_\alpha:X_\alpha\hookrightarrow X$ be the inclusion. Let
\begin{eqnarray*}
\Delta_\alpha=i_{\alpha,!}\Qla;\\
\nabla_\alpha=i_{\alpha,*}\Qla;\\
\IC_\alpha=i_{\alpha,!*}\Qla
\end{eqnarray*}
be the standard, costandard and intersection complexes in $\Perm{X}$ (because $i_\alpha$ is affine). Our normalization makes these complexes to be pure of weight 0 on $X_\alpha$.

Let $D_{\Delta,m}(X)$ (resp. $D_{\nabla,m}(X)$) be the full triangulated subcategory of $\Derm{X}$ generated by integer-weight-twists of $\Delta_\alpha$ (resp. $\nabla_\alpha$) for $\alpha\in I$. Let $D_{\Delta,c}(\geom{X})$  and $D_{\nabla,c}(\geom{X})$ be their images in $\Derc{X}$. We will consider the following condition on $X$:

($\Diamond$) {\em The subcategories $D_{\Delta,m}(X)$ and $D_{\nabla,m}(X)$ coincide.}

When ($\Diamond$) holds, the subcategories $D_{\Delta,c}(\geom{X})$ and $D_{\nabla,c}(\geom{X})$ also coincide. We use the more democratic symbols $\diac{X}$ and $\diam{X}$ to denote these subcategories.

We have the following criterion for the condition ($\Diamond$).

\begin{lemma}
The stratified scheme $X$ satisfies ($\Diamond$) if and only if for each $\alpha,\beta\in I$ and $j\in\ZZ$, the sheaf $\omega(\calH^ji_{\beta}^*i_{\alpha,*}\Ql)$ is a lisse $\Ql$-sheaf on $\geom{X_\beta}$ with unipotent geometric monodromy (i.e., it is a successive extension of constant sheaves).
\end{lemma}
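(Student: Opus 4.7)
The plan is to reduce ($\Diamond$) to a restriction criterion on the generators $\Delta_\alpha,\nabla_\alpha$ and then rephrase that criterion in the stated geometric form. First, $D_{\Delta,m}(X)=D_{\nabla,m}(X)$ is equivalent to the two inclusions $\nabla_\alpha\in D_{\Delta,m}(X)$ and $\Delta_\alpha\in D_{\nabla,m}(X)$ for all $\alpha$. With our normalization $\DD\Qla=\Qla$ on a smooth connected $X_\alpha$, so Verdier duality swaps $\Delta_\alpha\leftrightarrow\nabla_\alpha$ and $i_\beta^*\leftrightarrow i_\beta^!$ and preserves $\langle\Qla\rangle$; hence the two families of inclusions are equivalent, and it suffices to treat $\nabla_\alpha\in D_{\Delta,m}(X)$ for all $\alpha$.

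Next I would establish the following restriction criterion: $\calF\in D_{\Delta,m}(X)$ iff $i_\beta^*\calF\in\langle\Qla\rangle\subset\Derm{X_\beta}$ for every $\beta$. The ``only if'' direction is immediate from the computation $i_\beta^*\Delta_\gamma=\Qla$ if $\beta=\gamma$ and $0$ otherwise (because $\Delta_\gamma=i_{\gamma,!}\Qla$ is extended by zero off $X_\gamma$), combined with the triangulated functoriality of $i_\beta^*$ and integer-weight-twists. For the ``if'' direction, I would induct on $|I|$: choose a minimal (hence closed) stratum $X_{\gamma_0}$ with open complement $j:U\hookrightarrow X$ and use the recollement triangle
\[
j_! j^*\calF \to \calF \to i_{\gamma_0,*} i_{\gamma_0}^*\calF.
\]
By induction $j^*\calF\in D_{\Delta,m}(U)$, so $j_! j^*\calF\in D_{\Delta,m}(X)$ since $j_!\Delta_\gamma^U=\Delta_\gamma^X$; and $i_{\gamma_0,*}i_{\gamma_0}^*\calF=i_{\gamma_0,!}i_{\gamma_0}^*\calF$ lies in the triangulated envelope of integer-weight-twists of $\Delta_{\gamma_0}$, so belongs to $D_{\Delta,m}(X)$.

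Applying this criterion to $\nabla_\alpha=i_{\alpha,*}\Qla$ shows that ($\Diamond$) is equivalent to: for every $\alpha,\beta$, $i_\beta^*\nabla_\alpha\in\langle\Qla\rangle$ on $X_\beta$. Passing to cohomology sheaves and absorbing the shift/twist $[\dim X_\alpha](\dim X_\alpha/2)$ into the index $j$, this becomes exactly the stated condition on $\omega(\calH^j i_\beta^*i_{\alpha,*}\Ql)$.

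The one non-formal step, which I expect to be the main obstacle, is identifying $\langle\Qla\rangle\subset\Derm{X_\beta}$ with the class of complexes whose geometric cohomology sheaves are lisse with unipotent monodromy. One direction is automatic: extensions of Frobenius-twists of $\Qla$ are geometrically built from constant sheaves. For the converse, one decomposes a mixed lisse sheaf with unipotent geometric monodromy along its Frobenius weight filtration; each pure graded piece is geometrically constant with Frobenius acting by scalars of fixed absolute value, and is therefore a direct sum of integer-weight-twists of $\Ql$ \emph{provided the Frobenius weights are integers}. Integrality here is automatic because $i_\beta^*i_{\alpha,*}\Ql$ arises from the pure weight-zero sheaf $\Ql$ via the six-functor formalism, which preserves integrality of weights.
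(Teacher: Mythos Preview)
Your overall structure is sound and parallels the paper: reduce via Verdier duality to showing $\nabla_\alpha\in D_{\Delta,m}(X)$, then characterize membership in $D_{\Delta,m}(X)$ by restrictions to strata. Your recollement induction for the restriction criterion is more explicit than the paper (which simply asserts the equivalence) and is correct.

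The substantive difference, and the place where your argument has a gap, is in the converse of the final step. You assert that each pure weight-graded piece of a mixed lisse sheaf with unipotent geometric monodromy is \emph{geometrically constant}. This is not automatic: it requires Deligne's theorem that a pure lisse sheaf is geometrically semisimple, combined with the trivial observation that a semisimple unipotent representation is trivial. You do not invoke this, and without it the claim is unsupported. (A minor slip in the same sentence: even once geometrically constant, the pure piece need only be a \emph{successive extension} of integer-weight-twists of $\Ql$, not a direct sum, since Frobenius need not act semisimply; this is harmless for the conclusion.)

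The paper sidesteps this by running the filtration in the other direction. Instead of the weight filtration, it takes the canonical monodromy filtration $0\subset\calL_1\subset\calL_2\subset\cdots\subset\calL$ in which $\calL_m/\calL_{m-1}$ is the maximal geometrically-constant subsheaf of $\calL/\calL_{m-1}$. The graded pieces are geometrically constant \emph{by construction}; the only thing to check is that the filtration is Frobenius-stable, which follows from the normality of $\pi_1(\geom{X_\beta},*)$ in $\pi_1(X_\beta,*)$. Deligne is then invoked only for integrality of the resulting weights. This is a bit more economical than your route, though once you fill in the semisimplicity input your argument is also valid.

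(A small notational point: your use of $\langle\Qla\rangle$ to denote a generated triangulated subcategory clashes with the paper's convention that $\langle n\rangle=[n](n/2)$.)
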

\begin{proof}
Suppose ($\Diamond$) holds, then in particular $\nabla_\alpha\in D_{\Delta,m}(X)$. By definition, this means that for any $\beta$, $i^*_\beta\nabla_\alpha$ is a successive extension of shifts and twists of the constant sheaf, which implies that each $\omega(\calH^ji_{\beta}^*i_{\alpha,*}\Ql)$ is a successive extension of constant sheaves.

Conversely, suppose each $\calL=\omega(\calH^ji_{\beta}^*i_{\alpha,*}\Ql)$ has unipotent geometric monodromy. Then it has a unique finite filtration $0\subset\calL_1\subset\calL_2\subset\cdots\subset\calL$ such that $\calL_m/\calL_{m-1}\subset\calL/\calL_{m-1}$ is the largest subsheaf with trivial geometric monodromy. Since the geometric fundamental group $\pi_1(\geom{X_\beta},*)$ is normal in $\pi_1(X_\beta,*)$, this filtration is invariant under the Frobenius. Therefore this filtration realizes the mixed sheaf $\calH^ji_{\beta}^*i_{\alpha,*}\Ql$ as a successive extension of integer-weight-twists of the constant sheaf on $X_\beta$ (note that by \cite{Del}, $\calH^ji_{\beta}^*i_{\alpha,*}\Ql$ always has integer punctual weights). This means $\nabla_\alpha$ is a successive extension of shifts and integer-weight-twists of $\Delta_\beta$, i.e., $\nabla_\alpha\in D_{\Delta,m}(X)$ for all $\alpha$. Hence $D_{\nabla,m}(X)\subset D_{\Delta,m}(X)$. Applying Verdier duality, we get the opposite inclusion, hence $D_{\nabla,m}(X)=D_{\Delta,m}(X)$.
\end{proof}

\begin{cor}\label{c:orbits}
If the stratification of $X$ is given by the orbits under an algebraic group action, then the condition ($\Diamond$) holds. 
\end{cor}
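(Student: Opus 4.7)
The plan is to verify the criterion of the preceding Lemma: for every pair $\alpha,\beta\in I$ and every $j\in\ZZ$, I must show that the $\ell$-adic sheaf $\calF := \omega(\calH^j i_\beta^* i_{\alpha,*}\Ql)$ on $\geom{X_\beta}$ is lisse with unipotent geometric monodromy. Writing $G$ for the algebraic group whose orbits form the strata, my starting observation is that $\Ql$ on $X_\alpha$ carries a canonical $G$-equivariant structure and $i_\alpha, i_\beta$ are $G$-equivariant maps, so equivariance propagates through the pushforward, the pullback, and the cohomology-sheaf functors, making each $\calH^j i_\beta^* i_{\alpha,*}\Ql$ into a $G$-equivariant constructible $\Ql$-sheaf on the single orbit $X_\beta$.

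To establish lisseness, I would fix a geometric point $x\in X_\beta(\bar k)$ with stabilizer $G_x$ and consider the orbit map $q: G\to X_\beta$, $g\mapsto gx$, which identifies $X_\beta$ with $G/G_x$. The pullback $q^*\calF$ is a $G$-equivariant sheaf on $G$ for the left-translation action, hence constant; since $q$ is a smooth surjection, smooth descent then forces $\calF$ itself to be lisse on $\geom{X_\beta}$.

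For unipotent monodromy, I would invoke the étale homotopy exact sequence attached to the $G_x$-torsor $q$,
\[
\pi_1(G_{\bar k})\to \pi_1(\geom{X_\beta}, x) \to \pi_0(G_{x,\bar k})\to \pi_0(G_{\bar k}),
\]
and use the constancy of $q^*\calF$ on $G_{\bar k}$ to conclude that the geometric monodromy of $\calF$ factors through the finite quotient $\pi_0(G_{x,\bar k})$ (at least when $G$ is connected). Since a $\Ql$-representation of a finite group in characteristic zero is semisimple, such a representation is unipotent if and only if it is trivial, so unipotence of monodromy is equivalent to triviality of this residual $\pi_0(G_{x,\bar k})$-action on the stalk $\calF_x$.

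I expect the main obstacle to be isolating the hypothesis that ensures this residual action is trivial. In the applications of Section \ref{s:ex}, $G$ is connected and each stabilizer is an extension of a torus by a connected unipotent group, hence itself connected, so $\pi_0(G_{x,\bar k})=1$ and the monodromy is automatically trivial, a fortiori unipotent; more generally, the proof goes through whenever $G$ is connected with connected stabilizers on the strata, which covers all cases required in the sequel.
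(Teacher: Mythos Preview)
Your approach—invoke the criterion of the preceding lemma and verify lisseness plus unipotent geometric monodromy via $G$-equivariance—is the natural one, and since the paper gives no separate argument for the corollary, it is presumably also the intended one. The lisseness step is correct: $q^*\calF$ is a $G$-equivariant sheaf on $G$ for left translation, hence constant, and smooth descent along $q$ gives lisseness on $\geom{X_\beta}$.

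Your analysis of the monodromy is equally correct, and you are right to be cautious. When $G$ is connected the homotopy sequence for the torsor $q$ shows the geometric monodromy of $\calF$ factors through the finite group $\pi_0(G_{x,\bar k})$; a $\Ql$-representation of a finite group in characteristic zero is semisimple, so it is unipotent only if it is trivial. There is no mechanism forcing triviality for an arbitrary $G$-equivariant sheaf arising as $\calH^j i_\beta^* i_{\alpha,*}\Ql$, so you have located a genuine gap in the corollary \emph{as literally stated}: some hypothesis such as connectedness of stabilizers is needed.

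Your resolution is the right one for this paper. In every application (Section~\ref{s:ex}) the acting group is a Borel $B^+$ or an Iwahori $I^+$—connected and solvable—and the stabilizer of the base point $\tilw$ in its orbit is $B^+\cap wB^+w^{-1}$ or $I^+\cap\tilw I^+\tilw^{-1}$, a semidirect product of $T$ with a connected (pro-)unipotent group, hence connected. So $\pi_0(G_{x,\bar k})=1$, $\pi_1(G_{\bar k})\to\pi_1(\geom{X_\beta})$ is surjective, and the monodromy is trivial. Nothing downstream is affected, but you were right not to claim the corollary in greater generality than you can justify.
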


\begin{remark}
It is easy to see that under the condition ($\Diamond$), for any two locally closed subschemes $i:Y\subset Z$ of $X$ which are unions of strata, the functors $i_!,i_*$ send $\diam{Y}$ to $\diam{Z}$ and the functors $i^!,i^*$ send $\diam{Z}$ to $\diam{Y}$. Moreover, $\diam{X}$ inherits a perverse $t$-structure from that of $\Derm{X}$, with heart $\pdiam{X}:=\Perm{X}\cap \diam{X}$. The irreducible objects in $\pdiam{X}$ are integer-weight-twists of $\IC_{\alpha}$ for $\alpha\in I$. Similar remark applies to $\diac{X}$, and we have the perverse heart $\pdiac{X}$.
\end{remark}

\subsection{Mixed tilting sheaves}\label{ss:basic} Basic properties of tilting sheaves in the non-mixed setting are nicely explained in the first section of \cite{BBM}. We prove here some analogous properties in the mixed setting.

Let $X$ be a stratified scheme satisfying the condition ($\Diamond$). Recall that
\begin{defn}\label{d:mt}
A {\em mixed tilting sheaf} on $X$ (with respect to the given stratification) is an object $\calT\in \pdiam{X}$ such that for all $\alpha\in I$, $i_\alpha^*\calT$ and $i_\alpha^!\calT$ are lisse $\Ql$-sheaves on $X_\alpha$ placed in degree $-\dim X_\alpha$.
\end{defn}

It is easy to see that
\begin{lemma}\label{l:fil}
A mixed perverse sheaf $\calT\in \pdiam{X}$ is a mixed tilting sheaf if and only if it is both a successive extension of integer-weight-twists of standard sheaves and a successive extension of integer-weight-twists of costandard sheaves (in which case we also say that $\calT$ has both a $\Delta$-flag and a $\nabla$-flag).
\end{lemma}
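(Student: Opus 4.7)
I would split the biconditional into two directions; the easy one relies on elementary stalk/costalk computations on standards and costandards, while the harder direction is an induction on the number of strata.

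For the $(\Leftarrow)$ direction I would first verify
$$
i_\alpha^*\Delta_\beta \;=\; \begin{cases}\Qla & \text{if }\alpha=\beta,\\ 0 & \text{otherwise,}\end{cases}\qquad i_\alpha^!\nabla_\beta \;=\; \begin{cases}\Qla & \text{if }\alpha=\beta,\\ 0 & \text{otherwise.}\end{cases}
$$
These are obtained by factoring $i_\beta$ as the open immersion $X_\beta\hookrightarrow\overline{X_\beta}$ followed by the closed immersion $\overline{X_\beta}\hookrightarrow X$, and using the standard vanishings $k^*\bar{j}_!=0$ (resp.\ $k^!\bar{j}_*=0$) for the complementary closed/open pair $(\bar{j},k)$ inside $\overline{X_\beta}$. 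Applying $i_\alpha^*$ termwise to a $\Delta$-flag of $\calT$ kills every $\beta\neq\alpha$ piece, leaving $i_\alpha^*\calT$ a successive extension of integer-weight-twists of $\Qla$ on $X_\alpha$ — hence a lisse sheaf placed in degree $-\dim X_\alpha$. Symmetrically the $\nabla$-flag yields the $i_\alpha^!$ half of the tilting condition, and perversity of $\calT$ is automatic from either flag.

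For the $(\Rightarrow)$ direction I would induct on $|I|$. The case $|I|=1$ is immediate from the analysis behind condition $(\Diamond)$: any object of $\pdiam{X}$ on a single smooth connected affine stratum is a lisse sheaf with unipotent geometric monodromy in degree $-\dim X$, hence a successive extension of integer-weight-twists of $\Qla=\Delta_\alpha=\nabla_\alpha$. For $|I|>1$ I would pick a minimal element $\alpha\in I$ (so that $i:=i_\alpha:X_\alpha\hookrightarrow X$ is a closed immersion) and let $j:V:=X\setminus X_\alpha\hookrightarrow X$ be the open complement, stratified by the remaining $|I|-1$ strata and still satisfying $(\Diamond)$. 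Since $j^*\calT$ is tilting on $V$, by the inductive hypothesis it admits a $\Delta$-flag $0=\calT'_0\subset\cdots\subset\calT'_n=j^*\calT$ in $\pdiam{V}$ whose quotients are integer-weight-twists of $\Delta_{\beta_k}^V$ with $\beta_k\neq\alpha$. Because $j\circ j_{\beta_k}^V=i_{\beta_k}$, we have $j_!\Delta_{\beta_k}^V=\Delta_{\beta_k}\in\pdiam{X}$, and an induction on $k$ using the long exact sequence of perverse cohomology attached to the triangle $j_!\calT'_{k-1}\to j_!\calT'_k\to j_!(\calT'_k/\calT'_{k-1})$ shows that each $j_!\calT'_k$ — in particular $j_!j^*\calT$ — is perverse. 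Meanwhile $i_{\alpha,*}i_\alpha^*\calT$ is perverse (tilting makes $i_\alpha^*\calT$ perverse on $X_\alpha$ and $i_\alpha$ is closed), and by $(\Diamond)$ on the one-stratum scheme $X_\alpha$ it is a successive extension of twists of $i_{\alpha,*}\Qla=\Delta_\alpha$. The standard triangle $j_!j^*\calT\to\calT\to i_{\alpha,*}i_\alpha^*\calT$ thus has all three vertices in $\pdiam{X}$, so it is a short exact sequence there, and splicing the two flags produces a $\Delta$-flag of $\calT$. The $\nabla$-flag follows from the Verdier-dual argument applied to the triangle $i_{\alpha,*}i_\alpha^!\calT\to\calT\to j_*j^*\calT$, using the inductive $\nabla$-flag on $V$ and $j_*\nabla_\beta^V=\nabla_\beta$.

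The sole nontrivial technical point is the perversity of $j_!j^*\calT$ (and dually of $j_*j^*\calT$): for an open immersion, $j_!$ is only right $t$-exact and $j_*$ only left $t$-exact, so either could in principle produce perverse cohomology in the wrong degree. The tilting hypothesis on $\calT$ — funneled through the inductive flag on $V$ together with the perversity of each standard (resp.\ costandard) on $X$ — is precisely what forces those spurious perverse cohomologies to vanish and allows the short exact sequences above to materialize.
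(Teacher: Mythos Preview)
The paper gives no proof of this lemma at all --- it is introduced with ``It is easy to see that'' and left to the reader --- so there is nothing to compare against. Your argument is correct and supplies exactly the standard details: the $(\Leftarrow)$ direction via the stalk/costalk computations $i_\alpha^*\Delta_\beta=\delta_{\alpha\beta}\Qla$ and $i_\alpha^!\nabla_\beta=\delta_{\alpha\beta}\Qla$, and the $(\Rightarrow)$ direction by stripping off a closed stratum and using the recollement triangles. Your identification of the one genuine issue --- that $j_!j^*\calT$ (resp.\ $j_*j^*\calT$) need not be perverse a priori because the open immersion $j$ need not be affine --- and your resolution via the inductive $\Delta$-flag together with $j_!\Delta_\beta^V=\Delta_\beta\in\pdiam{X}$ is exactly right; this is where the paper's standing hypothesis that each $i_\beta$ is an affine morphism does the real work.
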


Let $Y\subset X$ be a locally closed subscheme which is a union of strata. We want to extend a mixed tilting sheaf on $Y$ to a mixed tilting sheaf on the closure $\overline{Y}$ of $Y$. In \cite{BBM}, Section 1.1, such an existence result is proved in the non-mixed setting. The argument in \textit{loc.cit.} also works to prove

\begin{lemma}\label{l:exist}
Suppose $H^i(\geom{X_\alpha},\Ql)=0$ for $i=1,2$ and all $\alpha\in I$. Then for any perverse sheaf $\calF\in \pdiam{Y}$, there exists a mixed tilting sheaf $\calT\in \pdiam{\overline{Y}}$ such that $\calT|_Y\cong\calF$. Moreover, if $\omega(\calF)\in \pdiac{Y}$ is indecomposable, we can choose $\calT$ such that $\omega(\calT)\in\pdiac{\overline{Y}}$ is also indecomposable.
\end{lemma}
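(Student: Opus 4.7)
The plan is to proceed by induction on the number of strata in $\overline Y \setminus Y$. If the boundary is empty there is nothing to do. Otherwise, choose a stratum $X_\alpha$ maximal among those in $\overline Y \setminus Y$, so that $Y' := Y \sqcup X_\alpha$ is open in $\overline Y$ with $X_\alpha$ as closed complement. Writing $j : Y \hookrightarrow Y'$ and $i : X_\alpha \hookrightarrow Y'$ for the two inclusions, it suffices to construct a mixed tilting sheaf $\calT' \in \pdiam{Y'}$ restricting to $\calF$ on $Y$; iterating and appealing to the inductive hypothesis on $Y' \subset \overline Y$ then yields the required $\calT \in \pdiam{\overline Y}$.

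For the inductive step I would adapt the construction of \cite{BBM}, Section~1.1 to the mixed category. Because $\calF$ has a $\Delta$-flag by Lemma~\ref{l:fil}, the pushforward $j_!\calF$ is automatically perverse on $Y'$ and itself built from standards; dually $j_*\calF$ carries a $\nabla$-flag. The idea is to build $\calT'$ as a mixed extension
\begin{equation*}
0 \to j_!\calF \to \calT' \to i_*K \to 0
\end{equation*}
in $\pdiam{Y'}$, where $K \in \pdiam{X_\alpha}$ is a suitable successive extension of integer-weight twists of $\Delta_\alpha$, chosen so that both $i^*\calT'$ and $i^!\calT'$ become lisse $\Ql$-sheaves placed in degree $-\dim X_\alpha$. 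Concretely one reads off the required $K$ from the perverse cohomologies of $i^*j_*\calF$ and $i^!j_!\calF$, using the $\nabla$- and $\Delta$-flags of $j_*\calF$ and $j_!\calF$ to see that these already live in $\diam{X_\alpha}$.

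The crux, and what I expect to be the main obstacle, is realising this extension inside the mixed category. By condition ($\Diamond$) all restrictions in question have underlying $\Ql$-sheaves which are successive extensions of the constant sheaf, so via the localisation triangle and adjunction the relevant $\Ext^1$ and $\Ext^2$ classes (both for the existence of the extension and for its Frobenius-equivariant lift) reduce to integer-Tate twists of the mixed $\ell$-adic cohomology groups $H^i(\geom{X_\alpha}, \Ql)$ for $i = 1, 2$. The hypothesis kills the geometric part, and the Hochschild--Serre spectral sequence for $X_\alpha/k$ then kills the arithmetic part, giving simultaneously the non-mixed construction of \cite{BBM} and the vanishing of the obstruction to carrying it out Frobenius-equivariantly. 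The delicate bookkeeping here is to check that every intermediate object and morphism in the \cite{BBM} construction lives naturally in $\pdiam{\cdot}$ and not merely in $\pdiac{\cdot}$.

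For the moreover clause I would apply Krull--Schmidt in the non-mixed category $\pdiac{\overline Y}$, which holds because the endomorphism algebras in question are finite-dimensional over $\Ql$. The indecomposable non-mixed tilting extension $\calT_0$ of $\omega(\calF)$ is known to exist by \cite{BBM}, and it occurs as a direct summand of $\omega(\calT)$ for any mixed tilting extension $\calT$ produced above. The corresponding idempotent $e \in \End_{\pdiac{\overline Y}}(\omega(\calT))$ lifts to a Frobenius-equivariant idempotent in $\End_{\pdiam{\overline Y}}(\calT)$ by the same obstruction-theoretic argument (the lifting obstructions again factor through strata cohomology in the vanishing range); cutting out its image yields the required indecomposable mixed tilting extension of $\calF$.
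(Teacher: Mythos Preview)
Your existence argument is essentially the paper's: reduce to adding a single closed stratum, follow the \cite{BBM} construction, and use the vanishing of $H^1$ and $H^2$ of the stratum (together with the Frobenius invariants/coinvariants sequence) to kill the relevant $\Ext^2$ obstruction in the mixed category. The paper phrases this more concretely via the four-term exact sequence $0\to i_*\calA\to j_!\calF\to j_*\calF\to i_*\calB\to0$ and the vanishing of the Yoneda class in $\Ext^2_{\pdiam{X}}(i_*\calB,i_*\calA)$, but the content is the same.

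The gap is in your ``moreover'' clause. You propose to build \emph{some} mixed tilting extension $\calT$, then split off the indecomposable non-mixed summand $\calT_0$ of $\omega(\calT)$ by lifting the Krull--Schmidt idempotent $e$ to a Frobenius-equivariant one. But this is not an $\Ext$-obstruction problem governed by strata cohomology: an element of $\End_{\overline Y}(\calT)$ is (in degree zero) a Frobenius-\emph{invariant} element of $\End_{\geom{\overline Y}}(\calT)$, so what you need is that $e$ itself can be chosen Frobenius-invariant. Frobenius acts on $\End_{\geom{\overline Y}}(\calT)$ as an algebra automorphism and sends $e$ to a conjugate idempotent; turning this into a genuinely fixed idempotent is a nonabelian $H^1$ problem for the unit group, and the vanishing of $H^1,H^2$ of the strata says nothing about it. So the sentence ``the lifting obstructions again factor through strata cohomology in the vanishing range'' is unjustified.

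The paper avoids this entirely by observing that indecomposability is \emph{preserved at each inductive step}. With $Z$ the added closed stratum, the construction produces $\calT$ together with the two short exact sequences $0\to i_*\calA\to\calT\to j_*\calF\to0$ and $0\to j_!\calF\to\calT\to i_*\calB\to0$, and the natural map $i^!\calT\to i^*\calT$ is zero by design. If $\omega(\calT)$ were decomposable, one summand $\calK$ would have to be supported on $Z$ (since $\omega(\calF)$ is indecomposable), and then $i^!\calT\to i^*\calT$ would contain $\id_\calK\neq0$ as a direct summand, a contradiction. This is both simpler and already Frobenius-equivariant, so no idempotent lifting is needed.
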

\begin{proof}
By induction on strata, the lemma reduces to the case $Y=X-Z$ where $Z$ is a single closed stratum. Let $i:Z\hookrightarrow X$ and $j:Y\hookrightarrow X$ be the inclusions. Consider the exact sequence
\begin{equation}\label{eq:ext2}
0\to i_*\calA\to j_!\calF\to j_*\calF\to i_*\calB\to0.
\end{equation}
in $\pdiam{X}$. Here $\calA,\calB\in\pdiam{Z}$. The only modification to the argument in \cite{BBM} is that we have to make sure the Yoneda Ext-group $\Ext_{\pdiam{X}}^2(i_*\calB,i_*\calA)$ is 0. Note that by Remark 3.1.17 in \cite{BBD}, the natural map
\begin{equation*}
\Ext_{\pdiam{X}}^2(i_*\calB,i_*\calA)\to\Ext_{X}^2(i_*\calB,i_*\calA)=\Ext_Z^2(\calB,\calA)
\end{equation*}
is injective. Therefore it suffices to show $\Ext_Z^2(\calB,\calA)=0$. Of course this reduces to the case where $\calA$ and $\calB$ are twists of $\Qla$. We have an exact sequence
\begin{equation*}
0\to \Ext_{\geom{Z}}^1(\calB,\calA)_{\Frob}\to\Ext_{Z}^2(\calB,\calA)\to\Ext_{\geom{Z}}^2(\calB,\calA)^{\Frob}\to 0.
\end{equation*}
The vanishing of the first and third term follows from the fact that $H^i(Z,\Ql)=0$ for $i=1,2$, Therefore the middle term also vanishes.

Now, since the Yoneda extension (\ref{eq:ext2}) is trivial, we can find an object $\calT\in\pdiam{X}$ with exact sequences
\begin{eqnarray*}
0\to i_*\calA\to\calT\to j_*\calF\to0\\
0\to j_!\calF\to\calT\to i_*\calB\to0
\end{eqnarray*}
and an obvious morphism between the two sequences. In particular, $i^!\calT=\calA$, $i^*\calT=\calB$, and the natural map $i^!\calT\to i^*\calT$ is zero.

Now suppose $\omega(\calF)$ is indecomposable. If $\omega(\calT)$ is decomposable, it must contain a direct summand $\calK\in\pdiac{X}$ which is supported on $Z$. But then $i^!\calT\to i^*\calT$ cannot be zero because it contains a direct summand isomorphic to $\id_{\calK}$. Hence $\omega(\calT)$ is also indecomposable.
\end{proof}

\begin{warning}
In the following, when we say a mixed tilting sheaf $\calT\in\pdiam{X}$ is {\em indecomposable}, we always mean that the non-mixed complex $\omega(\calT)\in\pdiac{X}$ is indecomposable.
\end{warning}

\begin{remark}
In the non-mixed setting, we have the following uniqueness statement (cf. Section 1.4 in \cite{BBM}): if $H^1(\geom{X_\beta},\Ql)=0$ for all $\beta$, then the indecomposable tilting extension of the constant perverse sheaf $\Ql[\dim X_\alpha]$ on $X_\alpha$ is unique up to non-unique isomorphisms in $\pdiac{X}$. In the mixed setting, we will see in the next section that under certain conditions, the indecomposable mixed tilting extension of $\Qla$ is unique up to a unique isomorphism in $\pdiam{X}$.
\end{remark}

\subsection{Non-cancellation property}\label{ss:nonc}

\begin{defn}\label{d:noncancel}
Let $\calT$ be a mixed tilting extension of $\Qla$. We say that $\calT$ has the {\em weak non-cancellation property} if for each $\beta<\alpha$, the mixed perverse sheaves $i_\beta^*\calT$ and $i_\beta^!\calT$ do not have isomorphic simple subquotients (equivalently, they have no simple subquotients of the same Frobenius eigenvalue). We say that $\calT$ has the {\em non-cancellation property} if for each $\beta<\alpha$, $i_\beta^*\calT$ and $i_\beta^!\calT$ do not have common punctual weights.
\end{defn}

\begin{prop}\label{p:noncancel}
Suppose $H^1(\geom{X_\beta},\Ql)=0$ for all $\beta$. Let $\calT$ be a mixed tilting extension of $\Qla$. The following conditions are equivalent:
\begin{enumerate}[(1)]
 \item $\End_X(\calT)=\Ql$;
 \item $\End_{\geom{X}}(\calT)^{\Fu}=\Ql$;
 \item $\calT$  satisfies the weak non-cancellation property.
\end{enumerate}
\end{prop}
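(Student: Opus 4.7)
The plan is to translate all three conditions into statements about the Frobenius-module $V := \End_{\geom{X}}(\calT)$, and then exploit the tilting structure of $\calT$ to analyze that Frobenius-module. First, the standard BBD exact sequence
$$0 \to \Ext^{-1}_{\geom{X}}(\calT,\calT)_{\Frob} \to \End_X(\calT) \to V^{\Frob} \to 0$$
together with the vanishing of the leftmost term (since $\calT$ is perverse, so $\Hom(\calT,\calT[-1])=0$ in the perverse heart) identifies $\End_X(\calT) \cong V^{\Frob}$. Thus (1) becomes the assertion $V^{\Frob} = \Ql$.

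Next I would describe the Frobenius-module structure of $V$ in terms of the strata of $\calT$. Using the $\Delta$-flag and $\nabla$-flag guaranteed by Lemma \ref{l:fil}, combined with the $\Ext$-computation $\Ext^k_{\geom{X}}(\Delta_\beta, \nabla_\gamma) = \delta_{k,0}\delta_{\beta,\gamma}\Ql$ for $k \le 1$ (for $\beta \ne \gamma$ this follows from $i_\beta^! i_{\gamma,*} = 0$, and for $\beta = \gamma$ from the hypothesis $H^1(\geom{X_\beta},\Ql)=0$), one obtains a filtration of $V$ whose associated graded, as a Frobenius-module, is
$$\bigoplus_{\beta \le \alpha} \Hom_{\geom{X_\beta}}(i_\beta^!\calT, i_\beta^*\calT).$$
The $\beta = \alpha$ summand equals $\Ql$ with trivial Frobenius action and accounts for $\id_\calT$.

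For each $\beta < \alpha$, the Frobenius eigenvalues on $\Hom_{\geom{X_\beta}}(i_\beta^!\calT, i_\beta^*\calT)$ are ratios of those on $i_\beta^*\calT$ and $i_\beta^!\calT$, so Frobenius has eigenvalue $1$ exactly when the two stalks share a Frobenius eigenvalue, i.e., exactly when weak non-cancellation fails at $\beta$. Since for any finite-dimensional Frobenius-module the sets of eigenvalues and generalized eigenvalues coincide, on each such summand $(\cdot)^{\Frob} = 0 \iff (\cdot)^{\Fu} = 0$. Using the additivity of $\dim(\cdot)^{\Fu}$ in short exact sequences (it is the multiplicity of $(t-1)$ in the characteristic polynomial of Frobenius), one computes $\dim V^{\Fu} = 1 + \sum_{\beta<\alpha} \dim \Hom^{\Fu}_{\geom{X_\beta}}(i_\beta^!\calT, i_\beta^*\calT)$, establishing (2) $\iff$ (3). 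The implication (2) $\Rightarrow$ (1) is then immediate from $\Ql \cdot \id \subset V^{\Frob} \subset V^{\Fu} = \Ql$.

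The main obstacle is the converse (1) $\Rightarrow$ (2), since $\dim(\cdot)^{\Frob}$ is only sub-additive on filtrations and cannot be read off from the graded pieces directly. My plan here is to exploit the local algebra structure of $V$ in the indecomposable case (per the Warning above, this is the relevant setting): there $V/\mathrm{rad}(V) = \Ql$ with trivial Frobenius, so $V^{\Frob} = \Ql \iff \mathrm{rad}(V)^{\Frob} = 0$ and $V^{\Fu} = \Ql \iff \mathrm{rad}(V)^{\Fu} = 0$, and these two vanishings are equivalent by the same eigenvalue/generalized-eigenvalue correspondence applied to $\mathrm{rad}(V)$, closing the loop.
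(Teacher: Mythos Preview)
Your argument for (2)$\iff$(3) is essentially the paper's: filter $V=\End_{\geom{X}}(\calT)$ using the $\Delta$-flag on the source and the $\nabla$-flag on the target, use $\Ext^1(\Delta_\beta,\nabla_\gamma)=0$, and read off the Frobenius-unipotent part from the graded pieces. One small slip: the associated graded is $\bigoplus_\beta \Hom_{\geom{X_\beta}}(i_\beta^*\calT,\,i_\beta^!\calT)$, not $\Hom(i_\beta^!\calT,\,i_\beta^*\calT)$, since the source carries the $\Delta$-flag (detected by $i_\beta^*$) and the target carries the $\nabla$-flag (detected by $i_\beta^!$). This swap is harmless here because the two Hom-spaces have mutually inverse Frobenius eigenvalues, so one contains the eigenvalue $1$ iff the other does.

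The genuine gap is in (1)$\Rightarrow$(2). You invoke the local algebra structure of $V$, justified by ``the indecomposable case (per the Warning above, this is the relevant setting).'' But the Warning only fixes terminology; the proposition does \emph{not} assume $\omega(\calT)$ indecomposable, and without that $V$ need not be local (e.g.\ $\calT$ could have a tilting summand supported on the boundary, giving a nontrivial idempotent in $V$). So the identification $\mathrm{rad}(V)=\ker r$ that your argument relies on is unavailable. The fix is immediate and is precisely what the paper does: work with $V^0:=\ker\bigl(r:V\to\End_{\geom{X_\alpha}}(\calT|_{X_\alpha})=\Ql\bigr)$ instead of $\mathrm{rad}(V)$. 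The section $c\mapsto c\cdot\id_\calT$ is Frobenius-equivariant (since $\id_\calT$ is Frobenius-fixed), so $V\cong \Ql\cdot\id\oplus V^0$ as Frobenius modules, and your eigenvalue/generalized-eigenvalue argument applied to $V^0$ gives $(V^0)^{\Frob}=0\iff(V^0)^{\Fu}=0$, i.e.\ (1)$\iff$(2). The paper phrases this as a contradiction: if $\dim V^{\Fu}>1$ while $V^{\Frob}=\Ql\cdot\id$, pick $\phi$ with $\Frob(\phi)=\phi+c\cdot\id$, $c\neq 0$, and apply $r$ to get $r(\phi)=r(\phi)+c$.
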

\begin{proof}
We first prove (1)$\Longleftrightarrow$(2). Clearly (2) implies (1). We show (1) also implies (2). Suppose $\End_X(\calT)=\Ql$ but $\dim_{\Ql}\End_{\geom{X}}(\calT)^{\Fu}>1$, then there exists $\phi\in\End_{\geom{X}}(\calT)^{\Fu}$ such that
\begin{equation}\label{eq:fruni}
\Frob(\phi)=\phi+c\cdot\id_{\calT}
\end{equation}
for some $c\in\Ql^{\times}$. But $\phi|_{X_\alpha}=a\cdot\id$ for some $a\in\Ql$. If we restrict (\ref{eq:fruni}) to $X_\alpha$, we get a contradiction.

Next, we prove (2)$\Longleftrightarrow$(3). By Lemma \ref{l:fil}, we can write $\calT$ as a $\Delta$-flag or a $\nabla$-flag. Because $H^1(\geom{X_\beta},\Ql)=0$ for all $\beta$, we have
\begin{equation*}
\Ext^1_{\geom{X}}(\Delta_\beta,\nabla_\gamma)=0, \forall\beta,\gamma\in I.
\end{equation*}
Therefore $\End_{\geom{X}}(\calT)$ is a successive extension of $\Hom_{\geom{X}}(\tildel_\beta,\tilnab_\gamma)$ for those twists $\tildel_\beta$ of $\Delta_\beta$ (resp. twists $\tilnab_\gamma$ of $\nabla_\gamma$) that appear as the subquotients of the $\Delta$-flag (resp. $\nabla$-flag). In particular, $\End_{\geom{X}}(\calT)^{\Fu}$ is a successive extension of these relevant $\Hom_{\geom{X}}(\tildel_\beta,\tilnab_\gamma)^{\Fu}$. Note that
\begin{equation*}
\Hom_{\geom{X}}(\tildel_\beta,\tilnab_\gamma)^{\Fu}=\left\{\begin{array}{ll} \Ql, & \textup{if } \beta=\gamma,\tildel_\beta|_{X_\beta}=\tilnab_\beta|_{X_\beta} \\ 0, & \textup{otherwise.} \end{array} \right.
\end{equation*}
Therefore condition (2) $\Longleftrightarrow$ $\Hom_{\geom{X_\alpha}}(\Delta_\alpha,\nabla_\alpha)$ is the only contribution to \newline $\End_{\geom{X}}(\calT)^{\Fu}$ $\Longleftrightarrow$ For each $\beta<\alpha$, twists $\tildel_\beta$ and $\tilnab_\beta$ which are the same on $X_\beta$ do not both occur in the $\Delta$-flag and the $\nabla$-flag $\Longleftrightarrow$ condition (3).

\end{proof}

By similar argument, we have

\begin{prop}\label{p:strong}
Suppose $H^1(\geom{X_\beta},\Ql)=0$ for all $\beta$. Let $\calT$ be a mixed tilting extension of $\Qla$. Then $\calT$ satisfies the non-cancellation property if and only if the Frobenius weights on $\End^0_{\geom{X}}(\calT)$ are nonzero, where $\End^0_{\geom{X}}(\calT)=\ker(\End_{\geom{X}}(\calT)\to\End_{\geom{X_\alpha}}(\calT|_{X_\alpha}))$.
\end{prop}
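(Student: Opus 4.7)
The plan is to mimic the proof of Proposition \ref{p:noncancel}, tracking Frobenius weights more carefully in the filtration of $\End_{\geom{X}}(\calT)$ coming from a $\Delta$-flag and a $\nabla$-flag of $\calT$.

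First, by Lemma \ref{l:fil}, fix a $\Delta$-flag $\{\tildel_{\beta_i}\}$ and a $\nabla$-flag $\{\tilnab_{\gamma_j}\}$ for $\calT$, where each $\tildel_{\beta_i}$ (resp.\ $\tilnab_{\gamma_j}$) is an integer-weight-twist of the appropriate standard (resp.\ costandard) sheaf. I would first observe that, because $i_\beta^*$ annihilates $\Delta_\gamma$ for $\gamma\neq\beta$ and is $t$-exact on standard sheaves, the $\Delta$-flag restricts to a filtration of $i_\beta^*\calT$ whose subquotients are exactly the $\tildel_\beta|_{X_\beta}$ for those indices $i$ with $\beta_i=\beta$. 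Hence the multiset of punctual weights appearing in $i_\beta^*\calT$ is exactly the multiset of weights of the $\Delta$-flag subquotients supported on $X_\beta$. Similarly, the multiset of punctual weights of $i_\beta^!\calT$ equals the multiset of weights of the $\nabla$-flag subquotients supported on $X_\beta$.

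Next, exactly as in the proof of Proposition \ref{p:noncancel}, the vanishing $\Ext^1_{\geom{X}}(\Delta_\beta,\nabla_\gamma)=0$ (which uses the hypothesis $H^1(\geom{X_\beta},\Ql)=0$) allows me to write $\End_{\geom{X}}(\calT)$ as a successive extension of the groups $\Hom_{\geom{X}}(\tildel_{\beta_i},\tilnab_{\gamma_j})$. Such a Hom vanishes unless $\beta_i=\gamma_j=:\beta$, in which case it is a $1$-dimensional $\Ql$-vector space on which Frobenius acts by the scalar $w(\tilnab_\beta)-w(\tildel_\beta)$ (in the sense of Frobenius weight on the $\Hom$), because $\Hom_{X_\beta}(\tildel_\beta|_{X_\beta},\tilnab_\beta|_{X_\beta})$ is the weight $w(\tilnab_\beta)-w(\tildel_\beta)$ part of $\Ql$. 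Passing to the subspace $\End^0_{\geom{X}}(\calT)$ kills exactly one copy of $\Ql$ in weight $0$, namely the one corresponding to the unique pair with $\beta_i=\gamma_j=\alpha$ (since $\calT|_{X_\alpha}=\Qla$, both the $\Delta$-flag and the $\nabla$-flag contain exactly one subquotient at the open stratum, both of weight $0$, and this piece is precisely the image of $\id_\calT$ in $\End_{\geom{X_\alpha}}(\calT|_{X_\alpha})$).

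Putting this together: the set of Frobenius weights on $\End^0_{\geom{X}}(\calT)$ (as a multiset) is exactly the set of differences $w(\tilnab_\beta)-w(\tildel_\beta)$ ranging over strata $\beta<\alpha$ and pairs of subquotients $\tildel_\beta$, $\tilnab_\beta$ at that stratum. All these weights are nonzero if and only if for every $\beta<\alpha$ no weight appearing among the $\Delta$-flag subquotients at $\beta$ coincides with a weight appearing among the $\nabla$-flag subquotients at $\beta$; by the identification in the first paragraph, this is exactly the non-cancellation property. I do not expect a serious obstacle; the main point that requires attention is the bookkeeping in step four, to make sure that the Frobenius weight on the $1$-dimensional $\Hom_{\geom{X}}(\tildel_\beta,\tilnab_\beta)$ agrees with the difference of weights of the flag subquotients on $X_\beta$.
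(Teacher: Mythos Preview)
Your proposal is correct and is precisely the ``similar argument'' the paper alludes to: the paper does not spell out a proof of this proposition at all, merely pointing back to the proof of Proposition~\ref{p:noncancel}, and you have filled in the expected adaptation (tracking punctual weights rather than Frobenius eigenvalues, and replacing the Frobenius-unipotent part by the weight decomposition of $\End^0$). The only cosmetic point to tidy is the phrasing ``Frobenius acts by the scalar $w(\tilnab_\beta)-w(\tildel_\beta)$'': what you mean is that this one-dimensional $\Hom$ has Frobenius weight equal to that difference, which you do clarify parenthetically.
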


\begin{prop}\label{p:can}
Suppose $H^i(\geom{X_\beta},\Ql)=0$ for $i=1,2$ and all $\beta$. Let $\calT$ be an indecomposable mixed tilting extension of $\Qla$. Assume $\calT$ has the weak non-cancellation property. Then any indecomposable mixed tilting extension of $\Qla$ is isomorphic to $\calT$. In particular, $\calT$ is Verdier self-dual.
\end{prop}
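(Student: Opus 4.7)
The plan is to reduce the uniqueness assertion to solving a single Lang-type equation inside the endomorphism algebra of $\calT$, and then to deduce Verdier self-duality by applying the resulting uniqueness to $\DD\calT$. By the non-mixed uniqueness theorem (Section~1.4 of \cite{BBM}), the underlying non-mixed complexes $\omega(\calT)$ and $\omega(\calT')$ are isomorphic in $\pdiac{X}$, and any such isomorphism restricts to a nonzero scalar on $X_\alpha$, so after rescaling we may choose $f_0\colon\omega(\calT)\xrightarrow{\sim}\omega(\calT')$ with $f_0|_{X_\alpha}=\id_{\Qla}$. Using $f_0$ to identify $\omega(\calT)=\omega(\calT')=:K$, the mixed sheaves $\calT$ and $\calT'$ correspond to two Frobenius structures $F,F'$ on $K$; these agree on the stalks over $X_\alpha$, so $u:=F'F^{-1}$ lies in $1+\mathfrak{m}$, where $\mathfrak{m}\subset A:=\End_{\geom{X}}(\calT)$ is the maximal ideal of the local $\Ql$-algebra $A$ (local because $\calT$ is indecomposable in $\pdiac{X}$, with residue field $\Ql$ via the restriction map to $X_\alpha$).

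\textbf{Translation to a Lang equation.} An isomorphism $\phi\colon\calT\xrightarrow{\sim}\calT'$ in $\pdiam{X}$ restricting to $\id_{\Qla}$ on $X_\alpha$ is exactly the datum of an element $h\in 1+\mathfrak{m}$ with $hF=F'h$, equivalently
\[ u=h\cdot\Frob(h)^{-1}, \]
where $\Frob(x):=FxF^{-1}$ is the Frobenius action on $A$ induced by the mixed structure on $\calT$. Thus the uniqueness claim reduces to the solvability of this Lang-type equation inside $1+\mathfrak{m}$.

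\textbf{Solving the Lang equation (the hard step).} The equivalence (2)$\Leftrightarrow$(3) of Proposition~\ref{p:noncancel} translates the weak non-cancellation hypothesis on $\calT$ into $A^{\Fu}=\Ql$: the Frobenius action on $\mathfrak{m}$ has no generalized eigenvalue~$1$, so $1-\Frob$ acts invertibly on each graded piece $\mathfrak{m}^k/\mathfrak{m}^{k+1}$ of the (finite, since $\mathfrak{m}$ is nilpotent) $\mathfrak{m}$-adic filtration. One solves $u=h\,\Frob(h)^{-1}$ by successive approximation: given $h_k\in 1+\mathfrak{m}$ with $u\equiv h_k\,\Frob(h_k)^{-1}$ modulo $1+\mathfrak{m}^{k+1}$, the error $v_k:=h_k^{-1}u\,\Frob(h_k)\in 1+\mathfrak{m}^{k+1}$ is of the form $1+x_k$ with $x_k\in\mathfrak{m}^{k+1}$; invertibility of $1-\Frob$ on $\mathfrak{m}^{k+1}/\mathfrak{m}^{k+2}$ produces $y_k\in\mathfrak{m}^{k+1}$ with $(1-\Frob)(y_k)\equiv x_k$ modulo $\mathfrak{m}^{k+2}$, and then $h_{k+1}:=h_k(1+y_k)$ improves the approximation by one order. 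The procedure terminates after finitely many steps and yields the desired $h$. This Lang-style step is the main obstacle, and it rests entirely on the weak non-cancellation hypothesis through Proposition~\ref{p:noncancel}.

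\textbf{Verdier self-duality.} Verdier duality $\DD$ fixes $\Qla$ (as $X_\alpha$ is smooth) and exchanges $\Delta$-flags with $\nabla$-flags, so $\DD\calT$ is again a mixed tilting extension of $\Qla$; it is indecomposable because $\DD$ is a contravariant autoequivalence of $\pdiac{X}$. Applying the uniqueness just proved to $\calT'=\DD\calT$ yields $\calT\cong\DD\calT$.
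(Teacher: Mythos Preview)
Your proof is correct and follows essentially the same approach as the paper: both invoke the non-mixed uniqueness from \cite{BBM} to identify $\omega(\calT)\cong\omega(\calT')$, then reduce the comparison of mixed structures to a Lang-type problem in $\Aut^1=1+\mathfrak{m}$ (the paper phrases this as vanishing of $H^1(\ZZ\Frob,\Aut^1)$), solved by filtering and using Proposition~\ref{p:noncancel} to make $1-\Frob$ invertible on the graded pieces. The only cosmetic difference is the choice of filtration: the paper uses the subquotients $E_j=\Hom_{\geom{X}}(\tildel_\beta,\tilnab_\beta)$ coming from the $\Delta$- and $\nabla$-flags, whereas you use the $\mathfrak{m}$-adic filtration of the local algebra $\End_{\geom{X}}(\calT)$---a slightly more abstract route that avoids re-examining the flag structure.
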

\begin{proof}
Let $\calT'$ be another indecomposable mixed tilting extension of $\Qla$. By the remark following Lemma \ref{l:exist}, we see $\omega(\calT')\cong\omega(\calT)$. Recall the following:

\begin{lemma}[\cite{BBD}, 5.5.1] The natural functor sending perverse sheaves on $X$ to pairs $(\calF,\phi)$ where $\calF$ is a perverse sheaf on $\geom{X}$ and $\phi: \Frob_X^*\calF\stackrel{\sim}{\to}\calF$ is fully faithful.
\end{lemma}

From this we easily deduce that the set of mixed structures on $\omega(\calT)$ is a subset of $H^1(\ZZ\Frob,\Aut_{\geom{X}}(\calT))$. Since we require the mixed perverse sheaf to be $\Qla$ on $X_\alpha$, it suffices to show that $H^1(\ZZ\Frob,\Aut^1)$ is trivial where $\Aut^1=\ker(\Aut_{\geom{X}}(\calT)\to\Aut_{\geom{X_\alpha}}(\calT|_{X_\alpha}))$. Note that by the construction of Lemma \ref{l:exist}, $\Aut^1$ is the $\Ql$-points of a {\em unipotent} algebraic group with Lie algebra $\End^0:=\End^0_{\geom{X}}(\calT)$. By the argument of Proposition \ref{p:noncancel}, $\End^0$ has a filtration by ideals with subquotients $E_j=\Hom_{\geom{X}}(\tildel_\beta,\tilnab_\beta)$ (viewed as abelian Lie algebras). Here $\tildel_\beta$ and $\tilnab_\beta$ are the subquotients of a $\Delta$-flag and a $\nabla$-flag of $\calT$. Similarly, $\Aut^1$ has a filtration by normal subgroups with subquotients $E_j$ (viewed as additive groups). By the weak non-cancellation property, this is $\Ql$ with nontrivial Frobenius action. Hence $H^1(\ZZ\Frob,E_j)=(E_j)_{\Frob}=0$ for all $j$. Therefore $H^1(\ZZ\Frob, \End^0)$ and $H^1(\ZZ\Frob, \Aut^1)$ also vanish.
\end{proof}

\begin{remark}
In the situation of the above proposition, we can speak about {\em the} indecomposable mixed tilting extension $\calT_\alpha$ of $\Qla$, which is unique up to a unique isomorphism which restricts to the identity on $X_\alpha$.
\end{remark}

\subsection{Proper Push-forward of tilting sheaves}\label{ss:push} This section serves solely as a preliminary for Section \ref{ss:wpush}. We work in the non-mixed setting.

A morphism $f:X\to Y$ between stratified schemes
\begin{equation*}
X=\bigsqcup_{\alpha\in I}X_\alpha;\hspace{1cm}Y=\bigsqcup_{\beta\in J}Y_{\beta}
\end{equation*}
is said to be {\em compatible with the stratifications} if there exists a map $\phi:I\to J$ such that
\begin{equation*}
f^{-1}(Y_\beta)=\bigsqcup_{\alpha\in\phi^{-1}(\beta)}X_\alpha
\end{equation*}
and each restriction $f_\alpha:X_\alpha\to Y_{\phi(\alpha)}$ is an \'{e}tale locally trivial fibration (necessarily with affine fibers since $X_\alpha$ is affine).

The author learned about the following result from R.Bezrukavnikov.

\begin{prop}\label{p:push}
Suppose $X$ and $Y$ are stratified schemes and $X$ satisfies condition ($\Diamond$). Let $f:X\to Y$ be a proper morphism compatible with the stratifications. Then for any tilting sheaf $\calT\in\pdiac{X}$, $f_*\calT\in\Derc{Y}$ is also a tilting sheaf on $Y$ with respect to the stratification of $Y$. 
\end{prop}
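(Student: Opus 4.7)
I will verify directly the two restriction conditions defining a tilting sheaf: for each stratum $Y_\beta \subset Y$, both $j_\beta^* f_*\calT$ and $j_\beta^! f_*\calT$ should be lisse sheaves placed in perverse degree $-\dim Y_\beta$. The essential tool is proper base change combined with Artin vanishing on the affine fibers of the maps $f_\alpha$.

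Fix a stratum $Y_\beta \subset Y$, and let $U_\beta := f^{-1}(Y_\beta)$ with $\tilde{j}_\beta: U_\beta \hookrightarrow X$ and $\tilde{f}_\beta: U_\beta \to Y_\beta$ the base change of $f$, which is again proper. Proper base change gives $j_\beta^* f_*\calT \cong \tilde{f}_{\beta,*}\tilde{j}_\beta^*\calT$. For a geometric point $y \in Y_\beta$, the stalk of this complex at $y$ is $R\Gamma(F_y, \calT|_{F_y})$, where $F_y := \tilde{f}_\beta^{-1}(y)$ is a proper scheme stratified by the affine varieties $F_y \cap X_\alpha$ of dimension $d_\alpha - d_\beta$, for $\alpha \in \phi^{-1}(\beta)$.

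Since $\calT$ is tilting, it admits both a $\Delta$-flag and a $\nabla$-flag (non-mixed analogue of Lemma \ref{l:fil}), and restricting these to $F_y$ yields flags of $\calT|_{F_y}$ with pieces $\Delta_\alpha|_{F_y}$ and $\nabla_\alpha|_{F_y}$. Using $f \circ i_\alpha = j_{\phi(\alpha)} \circ f_\alpha$, properness of $F_y$, and Artin vanishing for an affine variety of dimension $d_\alpha - d_\beta$, the pieces compute as
\[ R\Gamma(F_y, \Delta_\alpha|_{F_y}) = R\Gamma_c(F_y \cap X_\alpha, \Ql[d_\alpha]), \qquad R\Gamma(F_y, \nabla_\alpha|_{F_y}) = R\Gamma(F_y \cap X_\alpha, \Ql[d_\alpha]), \]
living in cohomological degrees $[-d_\beta, d_\alpha-2d_\beta]$ and $[-d_\alpha, -d_\beta]$ respectively. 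The $\Delta$-flag of $\calT|_{F_y}$ therefore forces $R\Gamma(F_y, \calT|_{F_y})$ into degrees $\geq -d_\beta$, while the $\nabla$-flag forces it into degrees $\leq -d_\beta$, so the stalk is concentrated in degree $-d_\beta$. Because $\tilde{f}_\beta$ is étale locally trivial on each stratum, the cohomology sheaves of $\tilde{f}_{\beta,*}\tilde{j}_\beta^*\calT$ are lisse on $Y_\beta$, and pointwise concentration in degree $-d_\beta$ thus upgrades to global concentration. This is the $*$-restriction property. The $!$-restriction property follows by Verdier duality: $\mathbb{D}\calT$ is again tilting, $\mathbb{D}(f_*\calT) = f_*\mathbb{D}\calT$ by properness, and $j_\beta^! = \mathbb{D}\,j_\beta^*\,\mathbb{D}$, so the same argument applied to $\mathbb{D}\calT$ yields the result. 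Lisse $*$- and $!$-restrictions in the correct perverse degrees force $f_*\calT$ to be perverse, hence tilting.

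\textbf{Expected obstacle.} The subtlest point is not the stalk computation itself but the passage from stalkwise concentration in degree $-d_\beta$ to the statement that $j_\beta^* f_*\calT$ is an honest lisse sheaf on $Y_\beta$. This uses the étale local triviality of each $f_\alpha$ together with proper base change to identify $R^i\tilde{f}_{\beta,*}\tilde{j}_\beta^*\calT$ as a local system whose geometric stalks match the pointwise calculation, so that vanishing of all other cohomology sheaves is genuine and not just stalkwise. A related (but easier) subtlety is justifying that the flags on $\calT$ restrict to flags on $\calT|_{F_y}$; this follows since each $f_\alpha$ is an étale locally trivial fibration, so restriction to a fiber preserves standard and costandard pieces.
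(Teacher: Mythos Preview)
Your $\Delta$-flag computation is correct: since $i_{\alpha,!}$ commutes with $*$-pullback, $\Delta_\alpha|_{F_y}$ vanishes unless $\phi(\alpha)=\beta$, and for those $\alpha$ your cohomological range $[-d_\beta, d_\alpha-2d_\beta]$ is right. This yields the bound $j_\beta^*f_*\calT\in$ (ordinary) degrees $\geq -d_\beta$, and Verdier duality then gives $j_\beta^!f_*\calT$ in degrees $\leq -d_\beta$. But these are \emph{not} the perversity conditions, and they do not by themselves force concentration in a single degree. You still need the opposite inequality $j_\beta^*f_*\calT\in$ degrees $\leq -d_\beta$, and that is where your $\nabla$-flag argument is supposed to enter.

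The gap is in the displayed formula $R\Gamma(F_y,\nabla_\alpha|_{F_y})=R\Gamma(F_y\cap X_\alpha,\Ql[d_\alpha])$. This is only correct when $\phi(\alpha)=\beta$. If $\phi(\alpha)\neq\beta$ then $F_y\cap X_\alpha=\emptyset$ and your right-hand side is zero, but $\nabla_\alpha|_{F_y}$ need not vanish: $*$-pullback does \emph{not} commute with $i_{\alpha,*}$, so $\nabla_\alpha|_{F_y}$ can be supported on $\overline{X_\alpha}\cap F_y$. Concretely, by proper base change $R\Gamma(F_y,\nabla_\alpha|_{F_y})$ is the stalk at $y$ of $f_*\nabla_\alpha=j_{\phi(\alpha),*}f_{\alpha,*}\Ql[d_\alpha]$, and when $y\in\overline{Y_{\phi(\alpha)}}\setminus Y_{\phi(\alpha)}$ this stalk is generically nonzero. (Try $X=\PP^1\times\PP^1\to Y=\PP^1$ with the product Schubert stratification and $\alpha=\AA^1\times\AA^1$, $\beta=\{0\}$.) So your fiberwise argument does not control these pieces, and the upper bound on the degree of $j_\beta^*f_*\calT$ is not established.

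The paper's proof sidesteps this by first proving perversity of $f_*\calT$ \emph{globally}: since $f\circ i_\alpha=j_{\phi(\alpha)}\circ f_\alpha$ is an affine morphism, Artin vanishing (BBD 4.1.1--4.1.2) gives $f_*\nabla_\alpha\in\leftexp{p}{D}^{\leq 0}(Y)$ directly, hence $f_*\calT\in\leftexp{p}{D}^{\leq 0}(Y)$. The bound $j_\beta^*f_*\calT\in$ degrees $\leq -d_\beta$ is then automatic from right $t$-exactness of $j_\beta^*$. The remaining bound $\geq -d_\beta$ is obtained exactly as you do, from the $\Delta$-flag on $i_{\phi^{-1}(\beta)}^*\calT$ (which \emph{does} restrict cleanly). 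Your proof becomes correct if you replace the fiberwise $\nabla$-computation by this global affine-morphism argument.
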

\begin{proof}
We first prove a lemma.
\begin{lemma}\label{l:push} 
Suppose we are in the same situation as above except that $f$ is not assumed to be proper.
\begin{enumerate}[(1)]
\item If $\calF\in\Perc{X}$ has a $\Delta$-flag, then $f_!\calF\in\leftexp{p}{D}^{\geq0}_c(\geom{Y},\Ql)$.
\item Dually, if $\calF\in\Perc{X}$ has a $\nabla$-flag, then $f_*\calF\in\leftexp{p}{D}^{\leq0}_c(\geom{Y},\Ql)$.
\end{enumerate}
\end{lemma}
\begin{proof}
We only need to prove (1); the proof of (2) is similar. Since $\calF$ is a successive extension of $\Delta_\alpha$, $f_!\calF$ is a successive extension by $f_!\Delta_\alpha=f_{\alpha,!}\Qla$. It suffices to show that each $f_{\alpha,!}\Qla\in\leftexp{p}{D}^{\geq0}_c(\geom{Y},\Ql)$. Since $f_{\alpha}$ has affine fibers, we can apply the argument of \cite{BBD}, Corollaire 4.1.2. 
\end{proof}

Now we prove the proposition. We first show that $f_*\calF$ is perverse. Since $f$ is compatible with the stratifications, $f_*\calT$ is constructible with respect to the stratification of $Y$. Lemma \ref{l:push}(1) implies $f_!\calT\in\leftexp{p}{D}^{\geq0}_c(\geom{Y},\Ql)$, and Lemma \ref{l:push}(2) implies $f_*\calT\in\leftexp{p}{D}^{\leq0}_c(\geom{Y},\Ql)$, hence $f_!\calT=f_*\calT\in\Perc{Y}$.

Next we prove that $f_*\calT$ is tilting. For any $\beta\in J$, let
\begin{equation*}
f_{\phi^{-1}(\beta)}:f^{-1}(Y_\beta)=\bigsqcup_{\alpha\in\phi^{-1}(\beta)}X_\alpha\to Y_\beta
\end{equation*}
be the restriction of $f$. Let $i_\beta$, $i_{\phi^{-1}(\beta)}$ be the inclusions $Y_\beta\hookrightarrow Y$ and $f^{-1}(Y_\beta)\hookrightarrow X$. Since $\calT$ has a $\Delta$-flag, $i^*_{\phi^{-1}(\beta)}\calT$ also has a $\Delta$-flag. Applying Lemma \ref{l:push}(1) to $f_{\phi^{-1}(\beta)}$, and by proper base change, we conclude that
\begin{equation*}
i_\beta^*f_!\calT=f_{\phi^{-1}(\beta),!}i_{\phi^{-1}(\beta)}^*\calT\in\leftexp{p}{D}^{\geq0}_c(\geom{Y_\beta},\Ql).
\end{equation*}
But we already know that $f_!\calT$ is perverse, which means $i_\beta^*f_!\calT\in\leftexp{p}{D}^{\leq0}_c(\geom{Y_\beta},\Ql)$. Hence we have $i_\beta^*f_!\calT\in\Perc{Y_\beta}$. 

A dual argument shows that $i_\beta^!f_*\calT\in\Perc{Y_\beta}$, therefore $f_!\calT=f_*\calT$ is a tilting sheaf.  
\end{proof}

\begin{remark}
If each $f_{\alpha}:X_\alpha\to Y_\beta$ is a trivial fibration and if $X$ and $Y$ satisfy ($\Diamond$), the tilting sheaf $f_*\calT$ is in $\pdiam{Y}$.
\end{remark}

Applying Proposition \ref{p:push} to the case where $Y$ is a point, we get
\begin{cor}
For a stratified proper scheme $X$ and any tilting sheaf $\calT\in D^b_c(\geom{X})$, we have
\begin{equation*}
\HH^i(\geom{X},\calT)=0,\forall i\neq0.
\end{equation*}
\end{cor}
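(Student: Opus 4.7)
The plan is to reduce this immediately to Proposition \ref{p:push} by taking $Y$ to be a point. Let $Y = \Spec(k)$ with its unique (trivial) stratification, and let $f\colon X \to Y$ be the structure morphism, which is proper by the hypothesis on $X$. I first need to check that $f$ is compatible with the stratifications in the sense of Section \ref{ss:push}: this requires a map $\phi\colon I \to J$ (here $J$ is a singleton so $\phi$ is forced), and that each $f_\alpha \colon X_\alpha \to \Spec(k)$ is an \'etale locally trivial fibration with affine fibers. Since each $X_\alpha$ is affine by the standing assumption in Section \ref{ss:ass}, this is automatic.

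Next I apply Proposition \ref{p:push} to conclude that $f_*\calT \in \Derc{Y}$ is a tilting sheaf on $Y = \Spec(k)$ with respect to the trivial stratification. I then observe that a tilting sheaf on a point is simply a finite-dimensional $\Ql$-vector space concentrated in cohomological degree $0$: indeed, on a point the perverse $t$-structure coincides with the standard one shifted by $0$ (since $\dim \Spec(k) = 0$), and the tilting condition $i_\alpha^*\calT, i_\alpha^!\calT$ lisse in degree $-\dim X_\alpha$ becomes the condition of being in degree $0$.

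Finally, the hyper-cohomology $\HH^i(\geom{X}, \calT)$ is computed as $\calH^i(f_*\calT)$, which vanishes for $i \neq 0$ by the above. No step presents a genuine obstacle; the only subtle point is verifying the compatibility of $f$ with the stratifications, which follows immediately from the affineness of the strata built into the definition in Section \ref{ss:ass}.
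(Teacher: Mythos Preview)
Your proof is correct and follows exactly the paper's approach: the paper simply states ``Applying Proposition~\ref{p:push} to the case where $Y$ is a point, we get'' and leaves the details to the reader. You have correctly spelled out those details---the compatibility of $f$ with the stratifications (trivial over a point with affine fibers by the standing assumption on strata), the conclusion that $f_*\calT$ is tilting on $\Spec(k)$, and the observation that tilting on a point means concentrated in degree $0$.
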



\section{Weights of mixed tilting sheaves}

\subsection{Weight polynomials}\label{ss:wp} Suppose the stratified scheme $X$ satisfies the condition ($\Diamond$). It is easy to see that the Grothendieck group
\begin{eqnarray*}
K(\diam{X})\cong K(\pdiam{X})&\cong&\bigoplus_{\alpha\in I}K(\pdiam{X_\alpha})[\Delta_\alpha]\\
&\to&\bigoplus_{\alpha\in I}\ZZ[t,t^{-1}][\Delta_\alpha].
\end{eqnarray*}
Here the $K(\pdiam{X_\alpha})\to\ZZ[t,t^{-1}]$ sends $\Qla$ to 1 and its weight-$n$-twists to $t^n$. For an object $\calF\in\diam{X}$, we write $[\calF]$ for the image of $\calF$ in $\bigoplus_{\alpha\in I}\ZZ[t,t^{-1}][\Delta_\alpha]$, we have:
\begin{equation*}
[\calF]=\sum_{\alpha\in I}W_\alpha(\calF,t)[\Delta_\alpha].
\end{equation*}
Here $W_\alpha(\calF,t)\in\ZZ[t,t^{-1}]$ is called the {\em weight polynomial of $\calF$ along the stratum $X_\alpha$}.

\subsection{Calculation of weights I--linear equations}\label{ss:cal1} Let $\calT$ be a mixed tilting extension of $\Qla$ which is Verdier self-dual. The definition of tilting sheaves implies that $W_\beta(\calT,t)$ has non-negative coefficients. We have the self-duality equation:
\begin{equation}\label{eq:selfdual}
\sum_{\beta\leq\alpha}W_\beta(\calT,t)[\Delta_\beta]=\sum_{\beta\leq\alpha}W_\beta(\calT,t^{-1})[\nabla_\beta]
\end{equation}
and the initial value condition $W_\alpha(\calT_{\alpha},t)=1$.

If we express $[\nabla_\beta]$ in terms of $\ZZ[t,t^{-1}]$-combinations of $[\Delta_\gamma]$ (for $\gamma\leq\beta$), we can compare the coefficients of $[\Delta_\beta]$ in equation (\ref{eq:selfdual}) and get a system of linear equations
\begin{equation}\label{eq:tri}
F_\beta=0, \beta\leq\alpha.
\end{equation}
This system of equations is triangular in the sense that $F_\beta$ only involves the coefficients of $W_\gamma(\calT,t)$ for $\gamma\geq\beta$.

If $\calT$ has the non-cancellation property, then for any $\beta<\alpha$ and integer $i$, $W_\beta(\calT,t)$ does not have non-zero coefficients for $t^i$ and $t^{-i}$ simultaneously. When this holds, we say that $W_\beta(\calT,t)$ has the {\em non-cancellation property}. In particular, $W_\beta(\calT,t)$ has no constant term for $\beta<\alpha$.

The following proposition guarantees that we can solve the triangular system of equations (\ref{eq:tri}) uniquely.
\begin{prop}\label{p:uni}
The self-duality equation
\begin{equation}\label{eq:sd}
\sum_{\beta\leq\alpha}W_\beta(t)[\Delta_\beta]=\sum_{\beta\leq\alpha}W_\beta(t^{-1})[\nabla_\beta]
\end{equation}
has at most one solution $\{W_\beta(t)\in\ZZ_{\geq0}[t,t^{-1}]\}_{\beta\leq\alpha}$ satisfying the non-cancellation property and the initial value condition $W_\alpha(t)=1$.
\end{prop}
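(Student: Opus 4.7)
The plan is to prove uniqueness by downward induction on $\beta \leq \alpha$ in the poset $I$, exploiting the upper-triangular shape of the change-of-basis matrix between $\{[\nabla_\gamma]\}$ and $\{[\Delta_\gamma]\}$. The base case is immediate: at $\beta=\alpha$ the initial condition $W_\alpha(t)=1$ is imposed. For the inductive step, suppose $W_\gamma(t)$ is determined for all $\gamma > \beta$; I will show $W_\beta(t)$ is determined too.

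Next I would extract the triangular system explicitly. Since $\nabla_\gamma$ restricts to $\Qla$ on $X_\gamma$, one has a relation
\begin{equation*}
[\nabla_\gamma] = [\Delta_\gamma] + \sum_{\delta < \gamma} c_{\delta\gamma}(t)\,[\Delta_\delta],
\end{equation*}
for uniquely determined $c_{\delta\gamma}(t) \in \ZZ[t,t^{-1}]$ (these are computed from the costalks of $\nabla_\gamma$ on smaller strata). Substituting into \eqref{eq:sd} and comparing the coefficient of $[\Delta_\beta]$ on both sides yields
\begin{equation*}
W_\beta(t) \;=\; W_\beta(t^{-1}) + \sum_{\beta < \gamma \leq \alpha} c_{\beta\gamma}(t)\,W_\gamma(t^{-1}),
\end{equation*}
which rearranges to $W_\beta(t) - W_\beta(t^{-1}) = R_\beta(t)$, where the right-hand side $R_\beta(t)$ depends only on the $W_\gamma$ with $\gamma > \beta$, hence is already determined by the inductive hypothesis.

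The heart of the argument is the following observation: if $\beta < \alpha$, and if $W_\beta(t) = \sum_i w_i t^i$ has nonnegative integer coefficients, no constant term, and at most one of $w_i, w_{-i}$ nonzero for each $i > 0$, then the map $W_\beta(t) \mapsto W_\beta(t) - W_\beta(t^{-1})$ is injective. Indeed, the coefficient of $t^i$ in $W_\beta(t) - W_\beta(t^{-1})$ equals $w_i - w_{-i}$; by non-cancellation this is either $w_i \geq 0$ (when $w_{-i}=0$) or $-w_{-i} \leq 0$ (when $w_i=0$), so its sign determines which of $w_i, w_{-i}$ is nonzero and its absolute value recovers that coefficient. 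Thus $W_\beta(t)$ is uniquely recovered from $R_\beta(t)$, completing the induction.

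The proof is essentially bookkeeping once the right setup is chosen; the only substantive point is the non-cancellation recovery lemma in the last paragraph. There should be no serious obstacle, although one must be mindful that the argument breaks without the non-cancellation hypothesis: for instance, $W_\beta(t) = t + t^{-1}$ and $W_\beta(t) = 0$ both satisfy $W_\beta(t) - W_\beta(t^{-1}) = 0$, showing that self-duality plus nonnegativity alone is insufficient for uniqueness.
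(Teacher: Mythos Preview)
Your proof is correct and follows essentially the same approach as the paper. Both arguments extract, via the upper-triangular change of basis from $[\nabla_\gamma]$ to $[\Delta_\gamma]$, the relation $W_\beta(t)-W_\beta(t^{-1})=(\text{known from larger }\gamma)$, and then invoke the non-cancellation property to conclude uniqueness; the only cosmetic difference is that the paper argues by contradiction on the difference of two putative solutions (taking a maximal $\beta$ with $U_\beta\neq 0$ and deriving $U_\beta(t)=U_\beta(t^{-1})$), whereas you give the equivalent constructive recovery of $W_\beta$ from $R_\beta$ by downward induction.
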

\begin{proof}
Suppose we have two different solutions $\{W_\beta(t)\}$ and $\{W'_\beta(t)\}$ with the required properties. Consider their difference $U_\beta(t)=W_\beta(t)-W'_\beta(t)$, which also satisfies the equation (\ref{eq:sd}). Let $\beta$ be a maximal element for which $U_\beta(t)\neq0$. Since $U_\alpha(t)=0$ by initial conditions, we have $\beta<\alpha$. Comparing the coefficients of $[\Delta_\beta]$ on both sides of the equation (\ref{eq:sd}), we conclude that
\begin{equation*}
U_\beta(t)=U_\beta(t^{-1})
\end{equation*}
Now both sides must have a term $ct^n$ for some $c\in\ZZ-\{0\}$ and $n\in\ZZ$. If $c>0$, both $t^n$ and $t^{-n}$ appear in $W_\beta(t)$; if $c<0$, both $t^n$ and $t^{-n}$ appear in $W'_\beta(t)$: in any case, it contradicts the non-cancellation property of $W_\beta(t)$ or $W'_\beta(t)$.
\end{proof}

\subsection{A condition on weights}\label{ss:conW} Let $\calT$ be a mixed tilting extension of $\Qla$. We consider the following condition on the weights of $\calT$:

(W) {\em For each $\beta<\alpha$, $i_\beta^*\calT$ is of weights $\geq1$ and $i_\beta^!\calT$ is of weights $\leq-1$}.

Note that here ``weights'' means weights of complexes, e.g., $\overline{\QQ}_{\ell,\beta}\Tate{\dim X_\beta}$ has weight 0.

\begin{remark}
Clearly, if $\calT$ satisfies the condition (W), then it has the non-cancellation property, hence all results of Section \ref{ss:nonc} apply. In particular, such $\calT$ is unique up to an isomorphism (which is unique up to a scalar), and is Verdier self-dual.
\end{remark}

\begin{lemma}\label{l:wprime}
Suppose $\calT$ is Verdier self-dual. Then the condition \textup{(W)} is equivalent to the condition

\textup{(W')} For each $\beta<\alpha$, $W_\beta(\calT,t)\in t\ZZ[t]$ for each $\beta<\alpha$.
\end{lemma}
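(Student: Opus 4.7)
The plan is to split (W) into its $*$- and $!$-halves, use the Verdier self-duality hypothesis to collapse the two halves into one, and then translate the remaining condition into a statement about the weight polynomial via a $\Delta$-flag.

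First I would exploit self-duality. The perverse constant sheaf on each $X_\beta$ is Verdier self-dual, so $\calT\cong\DD\calT$ forces $i_\beta^!\calT\cong\DD(i_\beta^*\calT)$, and Verdier duality exchanges ``weights $\geq w$'' with ``weights $\leq -w$''. Hence the two inequalities in (W) are equivalent, and (W) reduces to the single condition that $i_\beta^*\calT$ has weights $\geq 1$ for every $\beta<\alpha$.

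Next I would read off $i_\beta^*\calT$ from a $\Delta$-flag of $\calT$, whose existence is guaranteed by Lemma~\ref{l:fil}. By definition, the coefficient of $t^n$ in $W_\gamma(\calT,t)$ is the multiplicity with which the weight-$n$-twist of $\Delta_\gamma$ occurs as a subquotient. For $\gamma\neq\beta$ the strata $X_\beta$ and $X_\gamma$ are disjoint, so $i_\beta^*\Delta_\gamma=i_\beta^*i_{\gamma,!}(\cdots)=0$; only the $\Delta_\beta$-subquotients survive. Therefore $i_\beta^*\calT$ is a successive extension of weight-$n$-twists of the constant perverse sheaf $i_\beta^*\Delta_\beta$, with multiplicities given by the coefficients of $W_\beta(\calT,t)$. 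Since $i_\beta^*\Delta_\beta$ is pure of complex weight $0$ and a weight-$n$-twist raises the complex weight by exactly $n$, each constituent is pure of weight $n$, so $i_\beta^*\calT$ has weights $\geq 1$ if and only if only $n\geq 1$ contribute, if and only if $W_\beta(\calT,t)\in t\ZZ[t]$. Combined with the first step, this gives (W) $\Leftrightarrow$ (W$'$).

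The hard part will not really be hard: the argument is entirely formal once one is careful about the normalization convention, namely that $\overline{\QQ}_{\ell,X_\beta}[\dim X_\beta](\dim X_\beta/2)$ is pure of complex weight $0$ and that a ``weight-$n$-twist'' in the sense of the introduction raises the complex weight by exactly $n$, so that the coefficients of $W_\beta(\calT,t)$ translate directly into the integer weights of the Jordan--H\"older constituents of $i_\beta^*\calT$.
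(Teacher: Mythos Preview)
Your proof is correct and follows exactly the same approach as the paper: reduce (W) to its $*$-half via Verdier self-duality, then identify the weights of $i_\beta^*\calT$ with the exponents appearing in $W_\beta(\calT,t)$. The paper compresses the second step to ``obviously equivalent to (W$'$)'', whereas you spell out the $\Delta$-flag justification; both arguments are the same in substance.
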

\begin{proof}
Since $\calT$ is Verdier self-dual, (W) is equivalent to the condition that $i_\beta^*\calT$ is of weight $\geq1$ for each $\beta<\alpha$, which is obviously equivalent to (W').
\end{proof}

\subsection{Calculation of weights II--push-forward}\label{ss:wpush}
We consider the mixed version of the set-up of Section \ref{ss:push}. Recall $f:X\to Y$ is a proper morphism between stratified schemes which is compatible with the stratifications. We assume $X$ and $Y$ both satisfy the condition ($\Diamond$). We further suppose that each $f_\alpha:X_\alpha\to Y_{\phi(\alpha)}$ is a trivial fibration with {\em affine spaces as fibers}. By the remark following Proposition \ref{p:push}, $f_*$ sends $\diam{X}$ to $\diam{Y}$.

\begin{prop}\label{p:wpush}
Let $\calT_\alpha$ be {\em the} mixed tilting extension of $\Qla$ satisfying the condition \textup{(W)}. Then
\begin{enumerate}[(1)]
\item If $f_\alpha$ is an isomorphism, then $f_!\calT_\alpha$ is {\em the} mixed tilting extension of the constant perverse sheaf $\Ql\Tate{\dim Y_{\phi(\alpha)}}$ on $Y_{\phi(\alpha)}$ which also satisfies the condition (W);
\item If $f_\alpha$ is not an isomorphism (i.e., $\dim X_\alpha>\dim Y_{\phi(\alpha)}$), then $f_!\calT_\alpha=0$.
\end{enumerate}
\end{prop}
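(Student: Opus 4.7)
The plan is to combine Proposition \ref{p:push} (which guarantees $f_!\calT_\alpha\in\pdiam{Y}$ is already a mixed tilting sheaf, so the weight polynomial $W_\beta(f_!\calT_\alpha,t)$ lies in $\ZZ_{\geq 0}[t,t^{-1}]$ for every $\beta$) with an explicit $K$-theoretic calculation via proper base change, followed in Part (2) by a Verdier-self-duality argument. Since each $f_\gamma:X_\gamma\to Y_{\phi(\gamma)}$ is a trivial $\AA^{d_\gamma}$-bundle with $d_\gamma:=\dim X_\gamma-\dim Y_{\phi(\gamma)}$, a K\"unneth computation for $\Qla=\overline{\QQ}_{\ell,\gamma}\Tate{\dim X_\gamma}$ on $X_\gamma$ gives $f_{\gamma,!}\Qla=\Ql_{Y_{\phi(\gamma)}}\Tate{\dim Y_{\phi(\gamma)}-d_\gamma}$, a pure complex of weight zero sitting in perverse degree $d_\gamma$. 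Its class in $K(\pdiam{Y_{\phi(\gamma)}})\cong\ZZ[t,t^{-1}]$ is $(-t)^{d_\gamma}$ (the sign $(-1)^{d_\gamma}$ coming from the cohomological shift in $\Tate{-d_\gamma}$, the factor $t^{d_\gamma}$ from the half Tate twist). Applying $f_!$ to the $\Delta$-flag of $\calT_\alpha$ and using $f\circ i_\gamma=i_{\phi(\gamma)}\circ f_\gamma$ yields
\begin{equation*}
W_\beta(f_!\calT_\alpha,t)\;=\;\sum_{\gamma\leq\alpha,\;\phi(\gamma)=\beta}(-t)^{d_\gamma}\,W_\gamma(\calT_\alpha,t).
\end{equation*}

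For Part (1) ($d=0$), a dimension count shows that the only $\gamma\leq\alpha$ with $\phi(\gamma)=\phi(\alpha)$ is $\gamma=\alpha$: any $\gamma<\alpha$ would force $\dim X_\gamma<\dim X_\alpha=\dim Y_{\phi(\alpha)}$, incompatible with $\dim X_\gamma\geq\dim Y_{\phi(\alpha)}$ coming from surjectivity of $f_\gamma$. Hence $W_{\phi(\alpha)}(f_!\calT_\alpha,t)=1$, so $f_!\calT_\alpha$ extends $\Ql_{Y_{\phi(\alpha)}}\Tate{\dim Y_{\phi(\alpha)}}$ with multiplicity one. For $\beta<\phi(\alpha)$, every contributing $\gamma$ has $\gamma<\alpha$, so $W_\gamma(\calT_\alpha,t)\in t\ZZ_{\geq 0}[t]$ by (W') (Lemma \ref{l:wprime}); combined with tilting non-negativity this gives $W_\beta(f_!\calT_\alpha,t)\in t\ZZ_{\geq 0}[t]$. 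Thus $f_!\calT_\alpha$ satisfies (W), and Proposition \ref{p:can} identifies it as the claimed indecomposable tilting extension.

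For Part (2) ($d>0$), the same analysis gives $W_\beta(f_!\calT_\alpha,t)\in t\ZZ_{\geq 0}[t]$ for \emph{every} $\beta\leq\phi(\alpha)$, since even the top contribution from $\gamma=\alpha$ is now $(-t)^d$ with $d\geq 1$ and has no constant term. To conclude, I would use that $f_!\calT_\alpha$ is Verdier self-dual ($\calT_\alpha$ is by Proposition \ref{p:can}, and $f$ is proper). If $f_!\calT_\alpha\neq 0$, decomposing it into indecomposable tilting summands on $Y$ and letting $\beta_0$ be a maximal stratum appearing in its support, the multiplicity polynomial $P_{\beta_0}(t)\in\ZZ_{\geq 0}[t,t^{-1}]$ of the indecomposable tilting $\calT_{\beta_0}$ on $Y$ equals $W_{\beta_0}(f_!\calT_\alpha,t)$, is non-zero, and (by Verdier self-duality interchanging weight-$n$- with weight-$(-n)$-twists of $\calT_{\beta_0}$) satisfies $P_{\beta_0}(t)=P_{\beta_0}(t^{-1})$. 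But $P_{\beta_0}(t)\in t\ZZ_{\geq 0}[t]$ while $P_{\beta_0}(t^{-1})\in t^{-1}\ZZ_{\geq 0}[t^{-1}]$: these polynomials agree only when both are zero, contradicting the maximality of $\beta_0$. Hence $f_!\calT_\alpha=0$.

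The main technical point is the sign/degree bookkeeping in the base-change step: $f_{\gamma,!}\Qla$ fails to be perverse whenever $d_\gamma>0$, so its $K$-class carries the nontrivial sign $(-1)^{d_\gamma}$. The delicate part of Part (2) is that this sign, combined with the non-negativity of $W_\beta(f_!\calT_\alpha,t)$ (from tilting) and the $t\leftrightarrow t^{-1}$-symmetry (from Verdier self-duality), is precisely what pinches the top-stratum multiplicity polynomial down to zero.
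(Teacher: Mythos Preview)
Your $K$-theoretic computation of $W_\beta(f_!\calT_\alpha,t)$ and your treatment of Part~(1) are correct and match the paper's proof; you even supply the dimension count (showing $\gamma=\alpha$ is the only index with $\gamma\leq\alpha$ and $\phi(\gamma)=\phi(\alpha)$) that the paper leaves implicit.

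Part~(2), however, has a gap. You propose to decompose $f_!\calT_\alpha$ as a direct sum of weight-twists of indecomposable mixed tilting sheaves $\calT_{\beta}$ on $Y$, and then to use Verdier self-duality of $\calT_{\beta_0}$ to force $P_{\beta_0}(t)=P_{\beta_0}(t^{-1})$. None of this is available under the stated hypotheses: the only assumption on $Y$ is condition~($\Diamond$), so we do not know that indecomposable mixed tilting extensions $\calT_\beta$ exist on $Y$ (that would need $H^i(\geom{Y_\beta},\Ql)=0$ for $i=1,2$), nor that they are Verdier self-dual, nor that an arbitrary mixed tilting sheaf on $Y$ splits as a direct sum of their twists.

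The paper bypasses all of this by staying in $K(\diam{Y})$. Since $f_!\calT_\alpha=f_*\calT_\alpha$ is Verdier self-dual, its class satisfies the self-duality equation $\sum_\beta W_\beta(t)[\Delta_\beta]=\sum_\beta W_\beta(t^{-1})[\nabla_\beta]$. For $\beta_0$ maximal with $W_{\beta_0}\neq 0$, comparing coefficients of $[\Delta_{\beta_0}]$ yields $W_{\beta_0}(t)=W_{\beta_0}(t^{-1})$ directly: the change of basis from $[\nabla_\gamma]$ to $[\Delta_\cdot]$ is unitriangular, and all $W_\gamma$ with $\gamma>\beta_0$ vanish. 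This is exactly the identity you wanted, obtained with no structural input about tilting sheaves on $Y$. Equivalently and more geometrically: since $Y_{\beta_0}$ is open in the support, $i_{\beta_0}^*f_!\calT_\alpha=i_{\beta_0}^!f_!\calT_\alpha$ is Verdier self-dual on the single stratum $Y_{\beta_0}$, which already forces its weight polynomial to satisfy $W_{\beta_0}(t)=W_{\beta_0}(t^{-1})$. Combined with $W_{\beta_0}(t)\in t\ZZ[t]$, this gives the contradiction.
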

\begin{proof}
The functor $f_!$ induces a homomorphism
\begin{equation*}
f_\#:K(\diam{X})\to K(\diam{Y}).
\end{equation*}
Since each $f_\gamma$ is a trivial fibration with affine spaces as fibers, 
\begin{eqnarray*}
f_!\Delta_\gamma&=&i_{\phi(\gamma),!}f_{\gamma,!}\Ql\Tate{\dim X_\gamma}\\
&=&i_{\phi(\gamma),!}\Ql\Tate{-2(\dim X_\gamma-\dim Y_{\phi(\gamma)})+\dim X_\gamma}\\
&=&\Delta_{\phi(\gamma)}\Tate{-\dim X_\gamma+\dim Y_{\phi(\gamma)}}.
\end{eqnarray*}
Therefore
\begin{equation*}
f_\#[\Delta_\gamma]=(-t)^{\dim X_\gamma-\dim Y_{\phi(\gamma)}}[\Delta_{\phi(\gamma)}].
\end{equation*}
Applying $f_\#$ to $[\calT_\alpha]$, we get
\begin{equation*}
[f_!\calT_\alpha]=f_\#[\calT_\alpha]=\sum_{\gamma\leq\alpha}W_\gamma(\calT_\alpha,t)\cdot(-t)^{\dim X_\gamma-\dim Y_{\phi(\gamma)}}[\Delta_{\phi(\gamma)}].
\end{equation*}
Therefore we find
\begin{equation}\label{eq:ww}
W_\beta(f_!\calT_\alpha,t)=\sum_{\gamma\in\phi^{-1}(\beta),\gamma\leq\alpha}W_\gamma(\calT_\alpha,t)\cdot(-t)^{\dim X_\gamma-\dim Y_{\beta}}.
\end{equation}

We distinguish two cases: 

(1) If $f_\alpha$ is an isomorphism, then $f_!\calT_\alpha|_{Y_{\phi(\alpha)}}=\Ql\Tate{\dim Y_{\phi(\alpha)}}$. We know from the mixed version of Proposition \ref{p:push} that $f_!\calT_\alpha$ is a mixed tilting extension of the constant perverse sheaf $\Ql\Tate{\dim Y_{\phi(\alpha)}}$ on $Y_{\phi(\alpha)}$. Note that since $W_\gamma(\calT_\alpha,t)\in t\ZZ[t]$ whenever $\gamma<\alpha$, hence $W_{\beta}(f_!\calT_\alpha,t)\in t\ZZ[t]$ whenever $\beta<\phi(\alpha)$ by (\ref{eq:ww}) (note that the exponent $\dim X_\gamma-\dim Y_{\beta}\geq0$). Since $\calT_\alpha$ is Verdier self-dual, $f_!\calT_\alpha=f_*\calT_\alpha$ is also Verdier self-dual. Therefore by Lemma \ref{l:wprime}, $f_!\calT_\alpha$ satisfies the condition (W).

(2) If $\dim X_\alpha>\dim Y_{\phi(\alpha)}$, then $W_{\beta}(f_!\calT_\alpha,t)\in t\ZZ[t]$ for all $\beta\in J$. Suppose $\beta$ is a maximal index for which $W_{\beta}(f_!\calT_\alpha,t)$ is nonzero. Note that $f_!\calT_\alpha=f_*\calT_\alpha$ is Verdier self-dual. By comparing the coefficients of $[\Delta_\beta]$ in the self-duality equation (\ref{eq:selfdual}), we find $W_{\beta}(f_!\calT_\alpha,t)=W_{\beta}(f_!\calT_\alpha,t^{-1})$. This is impossible. Therefore all the weight polynomials of $f_!\calT_\alpha$ are zero, hence $f_!\calT_\alpha=0$.
\end{proof}

Applying Proposition \ref{p:wpush} to the case where $Y$ is a point, we get:
\begin{cor}
Suppose $X$ is a proper scheme stratified by affine spaces and satisfies ($\Diamond$). Let $\calT_\alpha$ be the mixed tilting extension of $\Qla$ (for some stratum $X_\alpha$) satisfying the condition (W). Then
\begin{equation*}
\HH^*(\geom{X},\calT_\alpha)=0
\end{equation*}
unless $\dim X_\alpha=0$.
\end{cor}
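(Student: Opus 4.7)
The plan is to apply Proposition \ref{p:wpush} directly, taking $Y$ to be a point equipped with its trivial (single-stratum) stratification. First I would verify that the hypotheses of the proposition are met in this setting: $Y = \Spec k$ trivially satisfies ($\Diamond$); $X$ satisfies ($\Diamond$) by hypothesis; the structure morphism $f : X \to Y$ is proper because $X$ is proper; and $f$ is compatible with the stratifications because every stratum $X_\alpha$ maps to the unique stratum of $Y$, with the restriction $f_\alpha : X_\alpha \to \Spec k$ being a trivial fibration whose fiber is $X_\alpha$ itself, which is an affine space by the hypothesis that $X$ is stratified by affine spaces.

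Next I would invoke Proposition \ref{p:wpush}: if $\dim X_\alpha > 0$ then $f_\alpha$ is not an isomorphism (its target is a point but its source is positive-dimensional), so case (2) gives $f_! \calT_\alpha = 0$. Finally I would translate this back to the desired statement. Since $X$ is proper, proper base change gives $f_! \calT_\alpha \cong f_* \calT_\alpha$, and this agrees with $R\Gamma(\geom{X}, \calT_\alpha) = \HH^*(\geom{X}, \calT_\alpha)$ (after applying the forgetful functor $\omega$, which is harmless for the vanishing statement). Hence $\HH^*(\geom{X}, \calT_\alpha) = 0$ whenever $\dim X_\alpha > 0$, which is the desired conclusion.

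There is essentially no obstacle here: the content of the corollary is already built into Proposition \ref{p:wpush}(2), and the only thing to check is that a point is a legitimate stratified target for the proposition. The argument is entirely parallel to the non-mixed corollary stated at the end of Section \ref{ss:push}, but in the mixed setting the key input is that the weight condition (W) is preserved under proper push-forward, which is what forces $f_!\calT_\alpha$ to vanish rather than merely to be concentrated in perverse degree zero.
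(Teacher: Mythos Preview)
Your proposal is correct and is exactly the approach the paper takes: the corollary is stated in the paper as an immediate consequence of applying Proposition \ref{p:wpush} with $Y$ a point, and you have spelled out precisely the verifications the paper leaves implicit. One tiny terminological quibble: the identification $f_! = f_*$ follows simply from properness of $f$, not from proper base change, but this does not affect the argument.
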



\section{Geometric Ringel duality}\label{s:geom}
In this section, we describe a situation where the non-cancellation property for indecomposable mixed tilting extensions is guaranteed. This situation arises when there exists a certain Radon transform, and resembles the Ringel duality in the sense that the Radon transform sends tilting objects to projective objects.

\subsection{The Radon transform}\label{ss:radon}
Let $B$ be an algebraic group containing a split torus $T$. Let $X,Y$ be schemes acted upon by $B$ with finitely many orbits:
\begin{equation*}
X=\bigsqcup_{\alpha\in I}X_\alpha;\hspace{1cm}Y=\bigsqcup_{\beta\in J}Y_{\beta}.
\end{equation*}

By Corollary \ref{c:orbits}, the stratified schemes $X$ and $Y$ satisfy the condition ($\Diamond$).

Let $U$ be a $B$-stable open subscheme of $X\times Y$, viewed as a correspondence between $X$ and $Y$:
\begin{equation*}
\xymatrix{& U\ar[dl]_{\overleftarrow{u}}\ar[dr]^{\overrightarrow{u}} & \\ X & & Y}
\end{equation*}
We will need to consider the following conditions:
\begin{enumerate}[(a)]
\item Each $B$-orbit $X_{\alpha}$ (resp. $Y_{\beta}$) contains a unique $T$-fixed point $x_\alpha$ (resp. $y_{\beta}$);
\item For each $\alpha\in I$ (resp. $\beta\in J$), the open subset $Y^{\alpha}:=\overrightarrow{u}(\overleftarrow{u}^{-1}(x_\alpha))\subset Y$ (resp. $X^{\beta}:=\overleftarrow{u}(\overrightarrow{u}^{-1}(y_\beta))\subset X$) contains a unique $T$-fixed point $y_{\hat{\alpha}}$ for some $\hat{\alpha}\in J$ (resp. $x_{\hat{\beta}}$ for some $\hat{\beta}\in I$), and is contracting to that fixed point under some one-parameter subgroup $\GG_m\subset T$ (which, of course, depends on $\alpha$ or $\beta$).
\item For each $\alpha\in I$, $\dim X_\alpha=\codim_Y Y_{\hat{\alpha}}$.
\item For each stratum $X_\alpha$, we have $H^i(\geom{X_\alpha},\Ql)=0$ for all $i>0$.
\end{enumerate}

\begin{remark}
An action of $\GG_m$ on a scheme $X$ is said to be {\em contracting to} $x\in X(k)$ if the action map extends to a map $\AA^1\times X\to X$ such that $\{0\}\times X$ is mapped to $x$.
\end{remark}

\begin{remark}
The condition (c) above implies that there is a natural bijection between the index sets $I$ and $J$: $\alpha\leftrightarrow\hat{\alpha}$ or $\hat{\beta}\leftrightarrow\beta$ characterized by the property that $(x_\alpha,y_{\hat{\alpha}})\in U$ or $(x_{\hat{\beta}},y_\beta)\in U$. 
\end{remark}

\begin{defn}
In the above setting, the {\em Radon transform from $X$ to $Y$} is the functor
\begin{equation*}
R_{X\to Y}:=\overrightarrow{u}_!\overleftarrow{u}^*\Tate{\dim Y}: \diam{X}\to \diam{Y};
\end{equation*}
with right adjoint functor
\begin{equation*}
R_{X\leftarrow Y}:=\overleftarrow{u}_*\overrightarrow{u}^!\Tate{-\dim Y}: \diam{Y}\to\diam{X}.
\end{equation*}
\end{defn}

\begin{remark}
The $B$-equivariance of the situation ensures that $R_{X\to Y}$ takes values in $\diam{Y}$. Similar remark applies to $R_{X\leftarrow Y}$.
\end{remark}

\begin{ex}
The terminology ``Radon transform'' is probably justified by the following simplest example. Let $V$ be a vector space of dimension $n$. After choosing a basis $\{v_1,\cdots,v_n\}$ for $V$, we identify $\GL(V)$ with the group $\GL_n$. Let $B$ be the subgroup of upper triangular matrices in $\GL_n$ and $T$ be the subgroup of diagonal matrices. Let $X=\PP(V)$ be the projective space parametrizing lines in $V$ and $Y=\check{\PP}(V)$ be the dual projective space parametrizing hyperplanes in $V$. Let $U=X\times Y-Z$ where $Z$ is the incidence correspondence between lines and hyperplanes. Then the $T$-fixed points in $X$ are the coordinate axes $x_i$ spanned by $v_i$ and the $T$-fixed points in $Y$ are the coordinate hyperplanes $y_i$ spanned by $\{v_j:j\neq i\}$. Condition (b) above amounts to the fact that $x_i$ is the only line which is not contained in the hyperplane $y_i$. The conditions (a)(c)(d) are also easy to verify.
\end{ex}

\begin{prop}\label{p:radon}
Under the conditions (a)(b), we have isomorphisms
\begin{equation}\label{eq:xtoy}
R_{X\to Y}(\nabla_\alpha)\cong\Delta_{\hat{\alpha}}\Tate{-\dim X_\alpha+\codim Y_{\hat{\alpha}}};
\end{equation}
\begin{equation}\label{eq:ytox}
R_{X\leftarrow Y}(\Delta_\beta)\cong\nabla_{\hat{\beta}}\Tate{\dim X_{\hat{\beta}}-\codim Y_{\beta}}.
\end{equation}
In particular, if (c) holds, then
\begin{eqnarray*}
R_{X\to Y}(\nabla_\alpha)\cong\Delta_{\hat{\alpha}};\\
R_{X\leftarrow Y}(\Delta_\beta)\cong\nabla_{\hat{\beta}}.
\end{eqnarray*}
\end{prop}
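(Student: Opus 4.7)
The plan is to identify $R_{X\to Y}(\nabla_\alpha)$ (and dually $R_{X\leftarrow Y}(\Delta_\beta)$) by computing their restrictions to each stratum. The $B$-equivariance of the Radon transforms implies that $i_\beta^*R_{X\to Y}(\nabla_\alpha)$ is a constant complex on $Y_\beta$, hence determined by its $*$-stalk at the $T$-fixed point $y_\beta$. I will show these stalks vanish unless $\beta=\hat\alpha$, in which case the stalk carries the predicted Tate twist, and then invoke recollement to conclude $R_{X\to Y}(\nabla_\alpha)\cong i_{\hat\alpha,!}(\text{constant sheaf})=\Delta_{\hat\alpha}\Tate{-\dim X_\alpha+\codim Y_{\hat\alpha}}$.

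The key geometric observation is that $\overrightarrow{u}^{-1}(y_\beta)=U\cap(X\times\{y_\beta\})$ is naturally identified via $\overleftarrow{u}$ with $X^\beta\subset X$, which is open in $X$ by condition (b). Thus proper base change gives
\begin{equation*}
i_{y_\beta}^* R_{X\to Y}(\nabla_\alpha)\cong R\Gamma_c(X^\beta,\nabla_\alpha|_{X^\beta})\Tate{\dim Y}.
\end{equation*}
Since $\nabla_\alpha$ is $T$-equivariant and $X^\beta$ is $\GG_m$-contracting to $x_{\hat\beta}$ by condition (b), the contraction principle gives $R\Gamma_c(X^\beta,\nabla_\alpha|_{X^\beta})\cong i_{x_{\hat\beta}}^!(\nabla_\alpha|_{X^\beta})=i_{x_{\hat\beta}}^!\nabla_\alpha$, where the last equality uses openness of $X^\beta$.

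Next I compute $i_{x_{\hat\beta}}^!\nabla_\alpha$ via Verdier duality as the dual of $i_{x_{\hat\beta}}^*\Delta_\alpha$. By condition (a), $x_{\hat\beta}\in X_\alpha$ precisely when $\hat\beta=\alpha$, equivalently $\beta=\hat\alpha$; for $\beta\neq\hat\alpha$ the stalk $i_{x_{\hat\beta}}^*i_{\alpha,!}\Qla$ vanishes, hence $i_{x_{\hat\beta}}^!\nabla_\alpha=0$. For $\beta=\hat\alpha$, factoring $i_{x_\alpha}$ through $i_\alpha$ yields $i_{x_\alpha}^!\nabla_\alpha=\Ql\Tate{-\dim X_\alpha}$. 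Combined with the outer $\Tate{\dim Y}$, this matches the $*$-stalk of $\Delta_{\hat\alpha}\Tate{-\dim X_\alpha+\codim Y_{\hat\alpha}}$ at $y_{\hat\alpha}$. The $B$-equivariance promotes the stalkwise conclusion to stratumwise vanishing $i_\beta^*R_{X\to Y}(\nabla_\alpha)=0$ for $\beta\neq\hat\alpha$, so $R_{X\to Y}(\nabla_\alpha)$ is set-theoretically supported in $Y_{\hat\alpha}$, and by recollement equals $i_{\hat\alpha,!}$ applied to its restriction there, yielding (\ref{eq:xtoy}).

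For (\ref{eq:ytox}) I run the dual argument: compute $i_{x_\gamma}^!R_{X\leftarrow Y}(\Delta_\beta)$ using the base change $i_{x_\gamma}^!\overleftarrow{u}_*\cong\overleftarrow{u}'_*\tilde{i}^!$, the identification $\overleftarrow{u}^{-1}(x_\gamma)\cong Y^\gamma$ (open in $Y$ by condition (b)), the contraction principle $R\Gamma(Y^\gamma,-)\cong i_{y_{\hat\gamma}}^*$, and the vanishing/computation of $i_{y_{\hat\gamma}}^*\Delta_\beta$ which is nonzero only when $\gamma=\hat\beta$. The main obstacle is careful bookkeeping of Tate twists arising from $!$-pullbacks on smooth varieties (each contributing a $\Tate{-2\dim}$), but once the strategy is in place this is routine. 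The simplification under condition (c) is immediate from $\dim X_\alpha=\codim Y_{\hat\alpha}$.
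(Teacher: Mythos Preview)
Your proposal is correct and follows essentially the same route as the paper: reduce by $B$-equivariance to stalks at $T$-fixed points, apply proper base change to identify the stalk with $\HH^*_c(X^\beta,\nabla_\alpha|_{X^\beta})$, invoke the contraction principle to rewrite this as the $!$-costalk at $x_{\hat\beta}$, and then read off the answer. The only cosmetic differences are that the paper computes $\delta_{\hat\beta}^!\nabla_\alpha$ directly (using that $i_\gamma^! i_{\alpha,*}=0$ for $\gamma\neq\alpha$) rather than via Verdier duality, and leaves the final recollement step implicit.
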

\begin{proof}
We first show (\ref{eq:xtoy}). Since all the complexes of sheaves involved are $B$-equivariant, it suffices to show that for any $\alpha\in I,\beta\in J$,
\begin{equation*}
\delta_{\beta}^*R_{X\to Y}(\nabla_{\alpha})=\left\{\begin{array}{ll} \Ql\Tate{\dim Y-\dim X_\alpha} & \alpha=\hat{\beta} \\ 0 & \alpha\neq\hat{\beta}\end{array} \right.
\end{equation*}
where $\delta_\beta$ is the inclusion $\{y_\beta\}\hookrightarrow Y$.

By proper base change, we have
\begin{eqnarray*}
\delta_\beta^*R_{X\to Y}(\nabla_\alpha) &=&\delta_\beta^*\overrightarrow{u}_!\overleftarrow{u}^*\nabla_\alpha\Tate{\dim Y}\\
&=& \HH^*_c(\geom{\overrightarrow{u}^{-1}(y_\beta)},\overleftarrow{u}^*\nabla_\alpha)\Tate{\dim Y}\\
&=& \HH^*_c(\geom{X^{\beta}},\nabla_\alpha|_{X^{\beta}})\Tate{\dim Y}.
\end{eqnarray*}
By assumption (b), under some $\GG_m\subset T$, $X^{\beta}$ is contracting to $x_{\hat{\beta}}$. Recall the following lemma (which is well-known, and a neat reference is T.A.Springer's paper \cite{Sp}, Corollary 1):
\begin{lemma}
Suppose $V$ is a scheme with a $\GG_m$-action which contracts to $a\in V(k)$, then for any complex $\calK\in D^b_m(V,\Ql)$ whose cohomology sheaves are $\GG_m$-equivariant, we have a canonical isomorphism of Frobenius modules
\begin{equation}\label{eq:res}
\HH^*(\id\to\delta_{a,*}\delta_a^{*}):\HH^*(\geom{V},\calK)\cong \delta_a^*\calK.
\end{equation}
Dually, we have
\begin{equation}\label{eq:cores}
\HH^*_c(\delta_{a,*}\delta_a^{!}\to\id):\delta_a^!\calK\cong\HH^*_c(\geom{V},\calK)
\end{equation}
Here $\delta_a:\{a\}\hookrightarrow V$ is the inclusion.
\end{lemma}

Applying (\ref{eq:cores}) to $V=X^{\beta}$ and $\calK=\nabla_{\alpha}|_{X^{\beta}}$, we get
\begin{eqnarray*}
\HH^*_c(\geom{X^{\beta}},\nabla_\alpha|_{X^{\beta}})\Tate{\dim Y} &=& \delta_{\hat{\beta}}^!(\nabla_\alpha|_{X^{\beta}})\Tate{\dim Y}\\
&=& \delta_{\hat{\beta}}^!i_{\alpha,*}\Ql\Tate{\dim X_\alpha+\dim Y}
\end{eqnarray*}
If $\alpha\neq\hat{\beta}$, then $x_{\hat{\beta}}\notin X_{\alpha}$ by assumption (a), hence the last term above is $0$. If $\alpha=\hat{\beta}$, then the last term above is the costalk of a constant sheaf on $X_\alpha$ (which is smooth) at $x_\alpha$, hence (after choosing a local orientation at $x_\alpha$) isomorphic to $\Ql\Tate{\dim X_{\alpha}+\dim Y-2\dim X_\alpha}$ $=\Ql\Tate{\dim Y-\dim X_\alpha}$.

The argument for (\ref{eq:ytox}) is dual to the above except that we have to apply (\ref{eq:res}) instead of (\ref{eq:cores}) in the final step. 
\end{proof}

\begin{cor}\label{c:equiv}
The Radon transform $R_{X\to Y}$ gives an equivalence of triangulated categories
\begin{equation*}
R_{X\to Y}:\diam{X}\to \diam{Y}
\end{equation*}
with $R_{X\leftarrow Y}$ as quasi-inverse.
\end{cor}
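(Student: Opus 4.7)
The strategy is to show that $R_{X\to Y}$ is fully faithful and essentially surjective; the right adjoint $R_{X\leftarrow Y}$ will then automatically serve as a quasi-inverse. The argument is essentially formal once Proposition \ref{p:radon} is in hand.

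First, I would verify that the bijection $\alpha \leftrightarrow \hat{\alpha}$ is an involution: the Remark preceding the proposition characterizes both $\alpha \mapsto \hat{\alpha}$ and $\beta \mapsto \hat{\beta}$ by the common condition $(x_\alpha, y_{\hat{\alpha}}) \in U$, and combined with the uniqueness assertion of (a)(b), this forces $\hat{\hat{\alpha}} = \alpha$. Proposition \ref{p:radon} then yields canonical isomorphisms $R_{X\leftarrow Y}\, R_{X\to Y}(\nabla_\alpha) \cong R_{X\leftarrow Y}(\Delta_{\hat{\alpha}}) \cong \nabla_\alpha$ for every $\alpha \in I$, and symmetrically $R_{X\to Y}\, R_{X\leftarrow Y}(\Delta_\beta) \cong \Delta_\beta$ for every $\beta \in J$. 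Since both Radon transforms commute with Tate twists, the same compatibility holds after any integer-weight twist.

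Next, compute $\End_X(\nabla_\alpha) = \End_{X_\alpha}(\Qla) = \Ql$ via the $(i_\alpha^*, i_{\alpha,*})$-adjunction and the connectedness of $X_\alpha$. Thus, after the identification above, the unit $\eta_{\nabla_\alpha} : \nabla_\alpha \to R_{X\leftarrow Y} R_{X\to Y}(\nabla_\alpha) \cong \nabla_\alpha$ is multiplication by some scalar $c_\alpha \in \Ql$. Non-vanishing of $c_\alpha$ follows from the triangle identity $\epsilon_{R_{X\to Y}(\nabla_\alpha)} \circ R_{X\to Y}(\eta_{\nabla_\alpha}) = \id_{R_{X\to Y}(\nabla_\alpha)}$: since $R_{X\to Y}$ is $\Ql$-linear, $R_{X\to Y}(\eta_{\nabla_\alpha}) = c_\alpha \cdot \id_{\Delta_{\hat{\alpha}}}$, so if $c_\alpha$ were $0$ we would obtain $\id_{\Delta_{\hat{\alpha}}} = 0$, contradicting $\Delta_{\hat{\alpha}} \neq 0$. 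Hence $c_\alpha \neq 0$ and $\eta_{\nabla_\alpha}$ is an isomorphism.

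Finally, the full subcategory of $\diam{X}$ on which the unit $\eta$ is an isomorphism is a triangulated subcategory (as $\eta$ is a natural transformation between triangulated functors) and contains all integer-weight twists of the generators $\nabla_\alpha$, so it equals $\diam{X}$. Consequently $R_{X\to Y}$ is fully faithful, and its essential image is a triangulated subcategory of $\diam{Y}$ containing every $\Delta_\beta = R_{X\to Y}(\nabla_{\hat{\beta}})$, hence equal to $\diam{Y}$. A fully faithful, essentially surjective left adjoint is a triangulated equivalence with quasi-inverse its right adjoint, which gives the conclusion. The only nontrivial point is the triangle-identity trick, which forces $c_\alpha \neq 0$ without any explicit stalk calculation; everything else reduces to combining Proposition \ref{p:radon} with the standard ``subcategory of isomorphism locus'' principle.
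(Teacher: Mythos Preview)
Your proof is correct and follows the same approach as the paper: both argue that the adjunction unit (and counit) is an isomorphism on the generators $\nabla_\alpha$ (respectively $\Delta_\beta$) and then pass to the whole triangulated category. The paper's proof is a one-line sketch that simply asserts the unit is an isomorphism on generators ``by Proposition~\ref{p:radon}''; your write-up is more careful in that it explains \emph{why} the unit map itself, rather than merely its source and target, is an isomorphism---the triangle-identity trick together with $\End_X(\nabla_\alpha)=\Ql$ is exactly what is needed to rule out the unit being zero, and this is a detail the paper leaves implicit.
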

\begin{proof}
The adjunction transform $\id\to R_{X\leftarrow Y}\circ R_{X\to Y}$ (resp. $R_{X\to Y}\circ R_{X\leftarrow Y}\to\id$) gives isomorphisms on the generating objects: integer-weight-twists of $\nabla_{\alpha}$ (resp. $\Delta_{\beta}$), by Proposition \ref{p:radon}.
\end{proof}

\subsection{Mixed tilting sheaves under the Radon transform}
\begin{prop}[Geometric Ringel duality]\label{p:tiltproj}
Suppose the conditions (a)--(d) in Section \ref{ss:radon} hold. 
\begin{enumerate}[(1)]
\item For any mixed tilting sheaf $\calT\in\pdiam{X}$, $\omega(R_{X\to Y}(\calT))$ is a projective object in $\pdiac{Y}$;
\item For any indecomposable mixed tilting extension $\calT$ of $\Qla$, \newline $\omega(R_{X\to Y}(\calT))$ is a projective cover of $\omega(\IC_{\hat{\alpha}})$ in $\pdiac{Y}$. Moreover, $\IC_{\hat{\alpha}}$ is the unique quotient of $R_{X\to Y}(\calT)$ in $\Perm{Y}$ whose underlying non-mixed perverse sheaf is semisimple.
\end{enumerate}
\end{prop}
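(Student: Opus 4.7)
The plan proceeds in three main steps, corresponding to the construction of a $\Delta$-flag on $R_{X\to Y}\calT$, the verification of projectivity via $\Ext^1$-vanishing, and finally the projective-cover and uniqueness clauses of part (2).

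\textbf{Step 1: $\Delta$-flag.} By Lemma \ref{l:fil}, the mixed tilting sheaf $\calT$ admits a $\nabla$-flag, i.e., a finite filtration in $\pdiam{X}$ whose successive subquotients are integer-weight-twists of $\nabla_\alpha$'s. Since $R_{X\to Y}$ is a triangulated equivalence (Corollary \ref{c:equiv}) and by Proposition \ref{p:radon} (using condition (c)) sends each $\nabla_\alpha$ to the perverse sheaf $\Delta_{\hat\alpha}$, the image $R_{X\to Y}\calT$ lies in $\pdiam{Y}$ and inherits a $\Delta$-flag with subquotients among integer-weight-twists of $\Delta_{\hat\alpha}$; the same holds after applying $\omega$. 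In particular, when $\calT = \calT_\alpha$, the maximal stratum $X_\alpha$ in the $\nabla$-flag of $\calT_\alpha$ corresponds to the maximal stratum $Y_{\hat\alpha}$ in the $\Delta$-flag of $R_{X\to Y}\calT_\alpha$, producing a canonical surjection $R_{X\to Y}\calT_\alpha\twoheadrightarrow\Delta_{\hat\alpha}\twoheadrightarrow\IC_{\hat\alpha}$.

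\textbf{Step 2: Projectivity.} To show $\omega(R_{X\to Y}\calT)$ is projective in $\pdiac{Y}$, we use that every object of $\pdiac{Y}$ has finite length and the injection $\Ext^1_{\pdiac{Y}}(\cdot,\cdot)\hookrightarrow\Hom_{\diac{Y}}(\cdot,\cdot[1])$ of \cite{BBD}, Remark 3.1.17, to reduce to showing
\begin{equation*}
\Hom_{\diac{Y}}(\omega(R_{X\to Y}\calT),\omega(\IC_\gamma)[1])=0\quad\text{for all }\gamma\in J.
\end{equation*}
By the adjunction $R_{X\to Y}\dashv R_{X\leftarrow Y}$, this equals $\Hom_{\diac{X}}(\omega(\calT),\omega(R_{X\leftarrow Y}\IC_\gamma)[1])$. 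Since $\calT$ is tilting, it satisfies $\Ext^{>0}_{\geom{X}}(\omega(\calT),F)=0$ for any perverse $F$ admitting a $\nabla$-flag --- a standard consequence of the $\Delta$-flag of $\calT$ combined with the orbit-stratified vanishing $\Ext^{>0}_{\geom{X}}(\Delta_\beta,\nabla_\delta)=0$ guaranteed by condition (d). So the desired vanishing reduces to establishing that $\omega(R_{X\leftarrow Y}\IC_\gamma)$ is a perverse sheaf admitting a $\nabla$-flag.

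This structural property of $R_{X\leftarrow Y}\IC_\gamma$ is the main technical obstacle. The strategy is to compute its costalks along strata of $X$ using base change, the contracting opens $Y^\alpha$ of condition (b), and the contraction lemma from the proof of Proposition \ref{p:radon}; this expresses each costalk as compactly supported cohomology of $\IC_\gamma$ over $Y^\alpha$, which reduces to the stalks of $\IC_\gamma$ at the $T$-fixed points $y_{\hat\alpha}$. Together with the pure-dimensional nature of the correspondence (condition (c)) and the perverse $t$-structure condition on stalks of $\IC_\gamma$, this concentration in a single perverse degree is what assembles the required $\nabla$-flag.

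\textbf{Step 3: Projective cover and uniqueness.} Assume $\calT=\calT_\alpha$ is the indecomposable mixed tilting extension of $\Qla$. Because the equivalence in Corollary \ref{c:equiv} preserves endomorphism algebras, $\End_{\diac{Y}}(\omega(R_{X\to Y}\calT_\alpha))\cong\End_{\diac{X}}(\omega(\calT_\alpha))$ is a local ring, so $\omega(R_{X\to Y}\calT_\alpha)$ is indecomposable. Together with projectivity from Step 2 and the canonical surjection onto $\omega(\IC_{\hat\alpha})$ from Step 1, this exhibits $\omega(R_{X\to Y}\calT_\alpha)$ as the projective cover of $\omega(\IC_{\hat\alpha})$. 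Finally, for any quotient $Q$ of $R_{X\to Y}\calT_\alpha$ in $\Perm{Y}$ with $\omega(Q)$ semisimple: since $\omega(R_{X\to Y}\calT_\alpha)$ is an indecomposable projective with simple head $\omega(\IC_{\hat\alpha})$, necessarily $\omega(Q)\cong\omega(\IC_{\hat\alpha})$, and the mixed structure on $Q$ is uniquely pinned down by the quotient map from $R_{X\to Y}\calT_\alpha$, yielding $Q\cong\IC_{\hat\alpha}$ in $\Perm{Y}$.
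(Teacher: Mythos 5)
Your Step 1 is correct and matches the paper, and the broad structure of Step 3 agrees with the paper. However, Step 2 contains a genuine gap, and the route you chose leads to a \emph{false} structural claim about $R_{X\leftarrow Y}\IC_\gamma$.

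You propose to show $\Hom_{\diac{X}}(\omega(\calT),\omega(R_{X\leftarrow Y}\IC_\gamma)[1])=0$ by proving that $\omega(R_{X\leftarrow Y}\IC_\gamma)$ is a perverse sheaf with a $\nabla$-flag. This is not true in general: $R_{X\leftarrow Y}$ is \emph{not} $t$-exact for the perverse $t$-structure, and $R_{X\leftarrow Y}\IC_\gamma$ is typically not perverse. For a concrete counterexample, take $G=\SL_2$, so $X=Y=\PP^1$, with strata a point and its $\AA^1$-complement. If $Y_\gamma$ is the open stratum, then $\IC_\gamma=\Ql_Y\Tate{1}$ is the (shifted) constant sheaf on $\PP^1$, and one computes directly (using that $\overleftarrow{u}$ is an $\AA^1$-fibration and $\overrightarrow{u}$ is smooth of relative dimension $1$) that $R_{X\leftarrow Y}\IC_\gamma=\Ql_X\Tate{2}=\Ql_X[2](1)$, which lives in perverse degree $-1$, not $0$. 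Your contraction-lemma computation of the costalks would in fact reveal this: the costalk/stalk formula produces the stalks of $\IC_\gamma$ at the $T$-fixed points $y_{\hat\alpha}$, and these are spread over perverse degrees $\le -1$ (that is precisely the $\IC$ support condition), not concentrated in degree $0$. So the ``concentration in a single perverse degree'' you hope for does not happen, and the $\nabla$-flag claim fails.

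The correct statement, and essentially what the paper uses, is weaker: since $\IC_\gamma$ is a perverse sheaf, it is a successive extension in $\diam{Y}$ of integer-weight-twists of $\Delta_\beta[d]$ with $d\ge 0$ (by induction on strata, using ${}^pH^{\bullet}i_\beta^*\IC_\gamma$ concentrated in degrees $\le 0$). Applying the triangulated equivalence $R_{X\leftarrow Y}$ and Proposition \ref{p:radon} then exhibits $R_{X\leftarrow Y}\IC_\gamma$ as a successive extension of $\nabla_{\hat\beta}[d]$ with $d\ge 0$. After shifting by $[1]$ you test $\Hom_{\geom{X}}(\calT,\nabla_{\hat\beta}[e])$ with $e\ge 1$, which vanishes because $\calT$ has a $\Delta$-flag and $\Ext^{>0}_{\geom{X}}(\Delta_\delta,\nabla_{\hat\beta})=0$ by condition (d). The paper bypasses $\IC_\gamma$ altogether and tests directly against $\Delta_\beta[i]$ for $i\ge 1$, then computes $\Ext^i_{\geom{Y}}(\calP,\Delta_\beta)=\Ext^i_{\geom{X}}(\calT,R_{X\leftarrow Y}\Delta_\beta)=\Ext^i_{\geom{X}}(\calT,\nabla_{\hat\beta})=0$; this is the cleaner route. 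Finally, in Step 3 your assertion that ``the mixed structure on $Q$ is uniquely pinned down by the quotient map'' is stated without justification; the paper supplies the argument by forming the image $\calI$ of $\calP\to\calQ\oplus\IC_{\hat\alpha}$ and observing that both projections $\calI\to\calQ$ and $\calI\to\IC_{\hat\alpha}$ are isomorphisms because $\omega(\calI)\cong\omega(\IC_{\hat\alpha})$ is simple. You should include this or an equivalent argument rather than appeal to intuition about mixed structures being determined.
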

\begin{proof}
The argument for (1) is essentially borrowed from \cite{BBM}, Section 2.3. Let $\calP=R_{X\to Y}(\calT)$.
Since $\calT$ has a $\nabla$-flag, $\calP$ has a $\Delta$-flag by Proposition \ref{p:radon}, hence $\calP\in\pdiam{Y}$.

Next, we show that $\omega(\calP)$ is a projective object in $\pdiac{Y}$. Since every object in $\pdiac{Y}$ is a successive extension of $\omega(\Delta_\beta)[d]$ for $d\geq0$, it suffices to show that $\Ext_{\geom{Y}}^i(\calP,\Delta_\beta)=0$ for all $i>0$. By adjunction and Proposition \ref{p:radon},
\begin{eqnarray*}
\Ext_{\geom{Y}}^i(\calP,\Delta_\beta)&=&\Ext_{\geom{X}}^i(\calT,R_{X\leftarrow Y}(\Delta_\beta))\\
&=&\Ext_{\geom{X}}^i(\calT,\nabla_{\hat{\beta}}).
\end{eqnarray*}
The last term above is 0 because $\calT$ has a $\Delta$-flag and
\begin{equation*}
\Ext_{\geom{X}}^i(\Delta_\gamma,\nabla_{\hat{\beta}})=\left\{\begin{array}{ll}0,&\textup{if }\gamma\neq\hat{\beta}\\ H^i(\geom{X_\gamma},\Ql)=0,&\textup{if }\gamma=\hat{\beta}\end{array}\right.
\end{equation*}
by condition (d).

(2) By Corollary \ref{c:equiv},
\begin{equation*}
\End_{\geom{Y}}(\calP)=\End_{\geom{X}}(\calT).
\end{equation*}
has no nontrivial idempotents because $\omega(\calT)$ is indecomposable, hence $\omega(\calP)$ is also indecomposable. Therefore it is a projective cover of an IC sheaf in $\pdiac{Y}$. Note that we have a surjection $\calT\twoheadrightarrow\nabla_\alpha$ in $\pdiam{X}$ whose kernel has a $\nabla$-flag. Therefore, by Proposition \ref{p:radon}, we have a surjection $\calP\twoheadrightarrow\Delta_{\hat{\alpha}}$ in $\pdiam{Y}$ whose kernel has a $\Delta$-flag. In particular, we get a surjection $\calP\twoheadrightarrow\Delta_{\hat{\alpha}}\twoheadrightarrow\IC_{\hat{\alpha}}$ in $\pdiam{Y}$. This implies that $\omega(\calP)$ is a projective cover of $\omega(\IC_{\hat{\alpha}})$. 

Suppose $\calP\twoheadrightarrow\calQ\in\Perm{Y}$ and $\omega(\calQ)\in\pdiac{Y}$ is semisimple, then the property of projective covers implies that $\omega(\calQ)\cong\omega(\IC_{\hat{\alpha}})$. Let $\calI$ be the image of $\calP\to\calQ\oplus\IC_{\hat{\alpha}}$. Then we also have $\omega(\calI)\cong\omega(\IC_{\hat{\alpha}})$, and the two projections give $\calI\cong\calQ$ and $\calI\cong\IC_{\hat{\alpha}}$. Hence $\calQ\cong\IC_{\hat{\alpha}}$. The proof is complete.
\end{proof}

\begin{theorem}\label{th:tcan}
Suppose the conditions (a)--(d) in Section \ref{ss:radon} hold. Let $\calT$ be an indecomposable mixed tilting extension of $\Qla$. Then $\calT$ satisfies the non-cancellation property.
\end{theorem}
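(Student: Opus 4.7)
By Proposition~\ref{p:strong} (applicable since condition~(d) gives $H^1(\geom{X_\beta},\Ql)=0$), it suffices to show that Frobenius has no weight-$0$ eigenvector on $\End^0_{\geom{X}}(\calT)$. My plan is to transport this question across the Radon equivalence and exploit the projective cover structure.

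Let $\calP := R_{X\to Y}(\calT)$. By Proposition~\ref{p:tiltproj}(2), $\omega(\calP)$ is the projective cover of $\omega(\IC_{\hat\alpha})$ in $\pdiac{Y}$, so $\End_{\geom{Y}}(\calP)$ is a local algebra with residue field $\End(\IC_{\hat\alpha})=\Ql$ and maximal ideal $J$. Corollary~\ref{c:equiv} yields a Frobenius-equivariant algebra isomorphism $\End_{\geom{X}}(\calT)\cong\End_{\geom{Y}}(\calP)$. I would first identify $\End^0_{\geom{X}}(\calT)$ with $J$: the restriction map $\End(\calT)\to\End_{\geom{X_\alpha}}(\Qla)=\Ql$ factors through $\Hom(\calT,\nabla_\alpha)=\Ql$, which by Proposition~\ref{p:radon} corresponds under Radon to composition with $\calP\twoheadrightarrow\Delta_{\hat\alpha}$; since the projective-cover map is realized as the composition $\calP\twoheadrightarrow\Delta_{\hat\alpha}\twoheadrightarrow\IC_{\hat\alpha}$, the induced map $\Hom(\calP,\Delta_{\hat\alpha})\to\Hom(\calP,\IC_{\hat\alpha})$ is an isomorphism of one-dimensional spaces, so the Radon-side of the restriction map is precisely the augmentation $\End(\calP)\twoheadrightarrow\Ql$.

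To show $J$ carries no Frobenius weight-$0$ part, I would analyze $\End_{\geom{Y}}(\calP)$ via the weight/JH filtration of $\calP$. Projectivity of $\omega(\calP)$ in $\pdiac{Y}$ makes $\Hom_{\geom{Y}}(\calP,-)$ exact, so filtering the second factor of $\Hom(\calP,\calP)$ yields a Frobenius-stable filtration on $\End_{\geom{Y}}(\calP)$ whose subquotients are $\Hom_{\geom{Y}}(\calP,L)$ for each simple subquotient $L$ of $\calP$. By the projective-cover property combined with the purity of $\IC_{\hat\alpha}$ (BBD: intermediate extension of $\Ql\Tate{\dim Y_{\hat\alpha}}$), such a Hom vanishes unless $L$ is a weight-$w$-twist of $\IC_{\hat\alpha}$, in which case it is one-dimensional and pure of Frobenius weight $w$. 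Thus the weights of $\End_{\geom{Y}}(\calP)$ are exactly those $w$ at which $\IC_{\hat\alpha}$ (weight-$w$-twisted) appears as a JH factor of $\calP$, with multiplicity; the augmentation peels off the single weight-$0$ copy coming from the top of $\calP$, and non-cancellation reduces to showing that the top $\IC_{\hat\alpha}$ is the only weight-$0$ JH factor of $\calP$ isomorphic to $\IC_{\hat\alpha}$.

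The main obstacle is this final uniqueness assertion. My approach combines two ingredients: (i) an analysis of the $\Delta$-flag of $\calP$ (Radon image of the $\nabla$-flag of $\calT$ by Proposition~\ref{p:radon}), where each layer $\Delta_{\hat\gamma}$ is a $!$-extension of a pure weight-$0$ sheaf and hence has weights $\leq 0$, so a weight-$u$-twisted $\Delta_{\hat\gamma}$-layer contributes a weight-$0$ copy of $\IC_{\hat\alpha}$ to $\calP$ only when either $\hat\gamma=\hat\alpha$ and $u=0$ (the top, appearing with multiplicity $1$ by the initial condition $W_\alpha(\calT,t)=1$), or $\hat\gamma>\hat\alpha$ and $u>0$; and (ii) a rigidity argument for mixed projective covers of pure objects, paralleling Proposition~\ref{p:can}, which uses condition~(b) (contraction of $Y^\alpha$ to the $T$-fixed point $y_{\hat\alpha}$ and the hyperbolic-localization isomorphism employed in the proof of Proposition~\ref{p:radon}) to rule out the second possibility by forcing the weight structure of $\calP$ to be concentrated ``at the top''. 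Ingredient~(ii) is where I expect the main technical difficulty to lie.
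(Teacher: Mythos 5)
Your reduction via Proposition \ref{p:strong} is correct, and your identification of $\End^0_{\geom{X}}(\calT)$ with the maximal ideal $J$ of the local algebra $\End_{\geom{Y}}(\calP)$ matches the paper's strategy, as does the observation that projectivity of $\omega(\calP)$ makes $\Hom_{\geom{Y}}(\calP,-)$ exact, so the weights of $\End_{\geom{Y}}(\calP)$ are read off from the twists of $\IC_{\hat\alpha}$ appearing as Jordan--H\"older factors of $\calP$. The gap is in the final step: you correctly note that everything boils down to showing the top $\IC_{\hat\alpha}$ is the only weight-$0$ $\IC_{\hat\alpha}$-twist in $\calP$, you flag ingredient (ii) as the technical difficulty, and then you do not actually supply it. Your $\Delta$-flag analysis in (i) cannot close this on its own because a weight-$u$-twisted layer $\Delta_{\hat\gamma}$ with $u>0$ genuinely can contribute a weight-$0$ copy of $\IC_{\hat\alpha}$, and nothing in (i) bounds $u$ from above.

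The paper closes this gap by a shorter route that you should compare with your plan. Rather than going through the $\Delta$-flag of $\calP$, take the weight filtration $w_{\leq i}\calP$ directly. Each graded piece $\omega(\Gr^w_i\calP)$ lies in $\pdiac{Y}$ (because $\calP$ has a $\Delta$-flag and each $\Gr^w_i\Delta_\beta$ is a $B$-equivariant perverse sheaf) and is \emph{semisimple} by \cite{BBD}, Th\'eor\`eme 5.3.8. The highest nonzero piece $\Gr^w_m\calP$ is therefore a quotient of $\calP$ in $\Perm{Y}$ with semisimple underlying non-mixed perverse sheaf, so by the uniqueness assertion in Proposition \ref{p:tiltproj}(2) it must be $\IC_{\hat\alpha}$. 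Since $\IC_{\hat\alpha}$ is pure of weight $0$, this forces $m=0$ and $\Gr^w_0\calP\cong\IC_{\hat\alpha}$, which simultaneously proves that $\calP$ has weight $\leq 0$ and that the top $\IC_{\hat\alpha}$ is the unique weight-$0$ constituent. The remaining $V_i:=\Hom_{\geom{Y}}(\calP,\Gr^w_i\calP)$ for $i<0$ are then pure of weight $i<0$ (as multiplicity spaces inside a pure weight-$i$ semisimple object), so $\End^0_{\geom{Y}}(\calP)$ has strictly negative weights. This is the ``rigidity'' your plan needs, and it comes not from a localization argument using condition (b) but from the combination of Gabber's semisimplicity theorem with the projective-cover uniqueness you had already extracted from Proposition \ref{p:tiltproj}(2). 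You should replace your sketch of ingredient (ii) with this argument.
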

\begin{proof}
Let $\calP=R_{X\to Y}(\calT)$. According to Proposition \ref{p:strong}, it suffices to show that the Frobenius weights on $\End^0_{\geom{X}}(\calT)\cong\End^0_{\geom{Y}}(\calP)=\ker(\End_{\geom{Y}}(\calP)\to\End_{\geom{Y_{\hat{\alpha}}}}(i_{\hat\alpha}^*\calP))$ are negative.

Consider the weight filtration $w_{\leq i}\calP$ of $\calP$. We first claim that each $\omega(\Gr^w_i\calP)$ is in $\pdiac{Y}$. In fact, since $\calP$ has a $\Delta$-flag, it suffices to show that $\omega(\Gr^w_i\Delta_\beta)\in\pdiac{Y}$ for each $\beta$. Since $\Delta_\beta$ is $B$-equivariant, so are the $\Gr^w_i\Delta_\beta$, but it is easy to see that any $B$-equivariant perverse sheaf is in $\pdiac{Y}$.

By \cite{BBD}, Th\'{e}or\`{e}me 5.3.8, each perverse sheaf $\omega(\Gr^w_i\calP)$ is semisimple. By Proposition \ref{p:tiltproj}(c), we see the last piece of $\Gr^w_i\calP$ is $\IC_{\hat{\alpha}}$. We conclude that $\calP$ has weight $\leq 0$ and $\Gr^w_0\calP=\IC_{\hat{\alpha}}$. Since $\omega(\calP)$ is projective, the functor $\Hom_{\geom{Y}}(\calP,-)$ is exact. Therefore $\End^0_{\geom{Y}}(\calP)$ is a successive extension of the Frobenius modules $V_i:=\Hom_{\geom{Y}}(\calP,\Gr^w_i\calP)$ for $i<0$. Note that since $\omega(\Gr^w_i\calP)$ is semisimple, we have a decomposition in $\pdiam{Y}$
\begin{equation}
\Gr^w_i\calP=V_i\otimes\IC_{\hat{\alpha}}\bigoplus\calQ_i
\end{equation}
where $\calQ_i$ does not have simple constituents isomorphic to twists of $\IC_{\hat{\alpha}}$. Since $\Gr^w_i\calP$ has weight $i$, hence $V_i$ also has weight $i<0$. Therefore $\End^0_{\geom{Y}}(\calP)$ has negative weights. The proof is complete.
\end{proof}

\begin{remark}
Under the above assumptions, by the remarks following Proposition \ref{p:can}, the indecomposable mixed tilting extension of $\Qla$ is unique up to isomorphisms (which are unique up to a scalar). We denote it by $\calT_\alpha$.
\end{remark}

\subsection{Calculation of weights III--inverse matrix}\label{ss:cal2}
Suppose the conditions (a)--(d) in Section \ref{ss:radon} hold. We now give another method for computing the weight polynomials of $\calT_\alpha$, which is a composition of Ringel duality (see Proposition \ref{p:tiltproj}) and Bernstein-Gelfand-Gelfand reciprocity.

\begin{prop}\label{p:tp}
The matrix $(W_{\gamma}(\calT_\alpha,t))_{\alpha,\gamma\in I}$ is the inverse of the matrix\newline $(W_{\hat{\gamma}}(\IC_{\hat{\alpha}},t^{-1}))_{\alpha,\gamma\in I}$.
\end{prop}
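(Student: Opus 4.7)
The plan is to combine the geometric Ringel duality of Proposition \ref{p:tiltproj} with BGG reciprocity in the K-group.

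Set $\calP_\alpha:=R_{X\to Y}(\calT_\alpha)$; by Proposition \ref{p:tiltproj}(2) this is the projective cover of $\IC_{\hat\alpha}$ in $\pdiac{Y}$. By Verdier self-duality of $\calT_\alpha$ (Proposition \ref{p:can}) we have $[\calT_\alpha]=\sum_\gamma W_\gamma(\calT_\alpha,t^{-1})[\nabla_\gamma]$ in $K(\pdiam{X})$. Applying the K-theoretic equivalence of Corollary \ref{c:equiv} together with the identification $R_{X\to Y}(\nabla_\gamma)=\Delta_{\hat\gamma}$ from Proposition \ref{p:radon} converts this into
\begin{equation*}
[\calP_\alpha]=\sum_\gamma W_\gamma(\calT_\alpha,t^{-1})[\Delta_{\hat\gamma}]\quad\text{in }K(\pdiam{Y}).
\end{equation*}
Thus the weight polynomial $W_{\hat\gamma}(\calP_\alpha,t)$ is $W_\gamma(\calT_\alpha,t^{-1})$, and the entries of the matrix $(W_\gamma(\calT_\alpha,t))$ are recoverable (up to the involution $t\leftrightarrow t^{-1}$) from the $\Delta$-flag multiplicities of $\calP_\alpha$.

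Next I would invoke BGG reciprocity for the projective cover $\calP_\alpha$. Using the standard vanishing $\Ext^i_{\geom{Y}}(\Delta_{\hat\gamma},\nabla_{\hat\beta})=\delta_{\gamma,\beta}\delta_{i,0}\Ql$ (deduced from $i_{\hat\beta}^*i_{\hat\gamma,!}=0$ for $\gamma\neq\beta$ together with a cohomology vanishing that is transported from $X$ to $Y$ via the equivalence of Corollary \ref{c:equiv} applied to $\Delta_{\hat\beta}$ versus $\nabla_{\hat\beta}$), one computes $\Hom_{\pdiam{Y}}(\calP_\alpha,\nabla_{\hat\gamma}')$ over all weight-twists $\nabla_{\hat\gamma}'$ of $\nabla_{\hat\gamma}$ in two ways. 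The $\Delta$-flag of $\calP_\alpha$ extracts $W_{\hat\gamma}(\calP_\alpha,\cdot)$; projectivity of $\calP_\alpha$ together with the uniqueness of its simple quotient $\IC_{\hat\alpha}$ in $\pdiam{Y}$ extracts the multiplicity polynomial of $\IC_{\hat\alpha}$ as a Jordan-H\"older constituent of $\nabla_{\hat\gamma}$. After the $t\leftrightarrow t^{-1}$ involution that arises from passing weight-twists across the Hom, this identifies $W_\gamma(\calT_\alpha,t)$ with the $(\hat\alpha,\hat\gamma)$-entry of the $\nabla$-to-IC transition matrix in $K(\pdiam{Y})$.

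To conclude, by Verdier self-duality of $\IC_{\hat\alpha}$,
\begin{equation*}
[\IC_{\hat\alpha}]=\sum_\gamma W_{\hat\gamma}(\IC_{\hat\alpha},t)[\Delta_{\hat\gamma}]=\sum_\gamma W_{\hat\gamma}(\IC_{\hat\alpha},t^{-1})[\nabla_{\hat\gamma}],
\end{equation*}
so the matrix $(W_{\hat\gamma}(\IC_{\hat\alpha},t^{-1}))_{\alpha,\gamma}$ is precisely the IC-to-$\nabla$ transition matrix on $Y$. Inverting it yields the $\nabla$-to-IC transition matrix supplied by the previous paragraph, and comparing gives the desired matrix identity. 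The main obstacle I expect is the careful bookkeeping of Tate twists and Frobenius weights through the BGG step, so that the two $t\leftrightarrow t^{-1}$ involutions---one from Verdier-dualizing $\calT_\alpha$ (and $\IC_{\hat\alpha}$), the other from BGG---cancel exactly to produce the clean inverse matrix identity of the statement rather than a transposed or twisted variant. Once this bookkeeping is pinned down, the remainder is a short linear-algebra manipulation in the K-groups.
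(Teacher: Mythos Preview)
Your proposal is correct and follows essentially the same route as the paper: transport the $\nabla$-flag of $\calT_\alpha$ through $R_{X\to Y}$ to a $\Delta$-flag of $\calP_{\hat\alpha}$, then use BGG reciprocity and Verdier self-duality of the IC sheaves to identify the resulting multiplicity matrix as the inverse of $(W_{\hat\gamma}(\IC_{\hat\alpha},t^{-1}))$. The paper handles your anticipated bookkeeping concern by computing $\Hom_{\geom{Y}}(\calP_{\hat\alpha},\nabla_{\hat\gamma})$ directly as a Frobenius module and reading off its weight polynomial, rather than summing mixed $\Hom$'s over all twists; this uses only the projectivity of $\omega(\calP_{\hat\alpha})$ in $\pdiac{Y}$ (already supplied by Proposition~\ref{p:tiltproj}) and avoids any delicate question about $\Ext^1$ in the mixed category.
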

\begin{proof}
Consider the isomorphism
\begin{equation*}
R_\#:K(\diam{X})\stackrel{\sim}{\to} K(\diam{Y})
\end{equation*}
induced by the Radon transform $R_{X\to Y}$. Let $\calP_{\hat{\alpha}}=R_{X\to Y}(\calT_\alpha)$. Since $R_\#[\nabla_{\gamma}]=[\Delta_{\hat{\gamma}}]$ by Proposition \ref{p:radon}, we have
\begin{eqnarray*}
[\calP_{\hat{\alpha}}]=R_\#[\calT_\alpha]&=&R_\#\left(\sum_{\gamma\leq\alpha}W_\gamma(\calT_\alpha,t^{-1})[\nabla_\gamma]\right)\\
&=&\sum_{\gamma\leq\alpha}W_\gamma(\calT_\alpha,t^{-1})[\Delta_{\hat{\gamma}}]
\end{eqnarray*}
Therefore
\begin{equation}\label{eq:tp}
W_{\hat{\gamma}}(\calP_{\hat{\alpha}},t^{-1})=W_\gamma(\calT_\alpha,t)
\end{equation}
On the other hand, $W_{\hat{\gamma}}(\calP_{\hat{\alpha}},t^{-1})$ is the weight polynomial of $\Hom_{\geom{Y}}(\calP_{\hat{\alpha}},\nabla_{\hat{\gamma}})$, viewed as a mixed complex on $\Spec(k)$. Since the functor $\Hom_{\geom{Y}}(\calP_{\hat{\alpha}},-)$ extracts the simple constituents isomorphic to a twist of $\IC_{\hat{\alpha}}$, the weight polynomial of $\Hom_{\geom{Y}}(\calP_{\hat{\alpha}},\nabla_{\hat{\gamma}})$ is the same as the weighted multiplicity of $\IC_{\hat{\alpha}}$ in the composition series of $\nabla_{\hat{\gamma}}$ (This is the BGG reciprocity). Therefore the matrix $W_{\hat{\gamma}}(\calP_{\hat{\alpha}},t^{-1})$ is the same as the matrix expressing $[\nabla_{\hat{\gamma}}]$ in terms of $[\IC_{\hat{\alpha}}]$, hence inverse to the matrix expressing $[\IC_{\hat{\alpha}}]$ in terms of $[\nabla_{\hat{\gamma}}]$. Since $\IC_{\hat{\alpha}}$ is Verdier self-dual, we conclude that the matrix $(W_{\hat{\gamma}}(\calP_{\hat{\alpha}},t^{-1}))$ is the inverse matrix of $(W_{\hat{\gamma}}(\IC_{\hat{\alpha}},t^{-1}))$, which, together with (\ref{eq:tp}), implies the proposition.
\end{proof}


\section{Flag and affine flag varieties}\label{s:ex}

Let $G$ be a split reductive group over $k$. Fix a pair of opposite Borel subgroups $B^+$ and $B^-$ whose intersection is a split maximal torus $T$. Let $\coch(T)$ be the cocharacter group of $T$. Let $W$ be the Weyl group determined by $T$, then $W$ has a set of simple reflections determined by $B^+$ and hence a length function $\ell:W\to\ZZ_{\geq0}$. Let $w_0\in W$ be the longest element. Let $2\check{\rho}\coch(T)$ be the sum of positive coroots, viewed as a one-parameter subgroup of $T$. Let $\theta$ be the highest root.

\subsection{Radon transform for the flag variety}
The Radon transform for the flag variety was considered in \cite{BBM}, Section 2.2. We briefly recall it here. 

We consider two isomorphic flag varieties $X=G/B^+$ and $Y=G/B^-$. It is well-known that the orbit $U$ of $(B^+/B^+,B^-/B^-)$ under the diagonal $G$ action on the product $X\times Y$ is open dense. In fact, $U$ consists of pairs of opposite Borel subgroups of $G$. Consider $U$ as a correspondence between $X$ and $Y$, with two projections $\overleftarrow{u}:U\to X$ and $\overrightarrow{u}:U\to Y$. For $w\in W$, let $X_w=B^+wB^+/B^+$ and $Y_w=B^+wB^-/B^-$. We have the stratification by $B^+$-orbits
\begin{equation*}
X=\bigsqcup_{w\in W}X_w;\hspace{1cm}Y=\bigsqcup_{w\in W}Y_w.
\end{equation*}

We verify the conditions in Section \ref{ss:radon}.

(a) holds because $X_w$ (resp. $Y_w$) contracts to $wB^+/B^+$ (resp. $wB^-/B^-$) under the one-parameter subgroup $2\check{\rho}\in\coch(T)$.

(b) For each $w\in W$, $\overrightarrow{u}^{-1}(wB^-)=wB^-B^+/B^+$ contracts to $wB^+/B^+$ under the one-parameter subgroup $Ad(w)(-2\check{\rho})$; $\overleftarrow{u}^{-1}(wB^+)=wB^+B^-/B^-$ contracts to $wB^-/B^-$ under the one-parameter subgroup $Ad(w)(2\check{\rho})$.

(c)(d) follow from the fact that $X_w\cong\AA^{\ell(w)}$ and $Y_w\cong\AA^{\ell(w_0)-\ell(w)}$.

Therefore the results of Section \ref{s:geom} apply to this situation.

\subsection{Radon transform for the affine flag variety}\label{ss:aff}
In this section, we will have to deal with ind-schemes and schemes of infinite type.

Let $F$ be the field of formal Laurent polynomials $k((z))$ and $\calO^+=k[[z]]$ be the valuation ring. Let $\calO^-=k[z^{-1}]\subset F$. Let $I^+\subset G(\calO^+)$ be the Iwahori subgroup given by the preimage of $B^+$ under the projection $G(\calO^+)\to G$. Let $I^-\subset G(\calO^-)$ be the preimage of $B^-$ under the projection $G(\calO^-)\to G$. We call a subgroup of $G(F)$ which is conjugate to $I^-$ a {\em co-Iwahori subgroup}. 

Let $X=G(F)/I^+$ be the {\em affine flag variety}. This is an ind-scheme locally of finite type parametrizing $G$-torsors over $\PP^1$ together with a trivialization on $\PP^1-\{0\}$ and a $B^+$-reduction at $\{0\}$. Let $Y=G(F)/I^-$ be the {\em thick affine flag variety}. This is a scheme of infinite type parametrizing $G$-torsors over $\PP^1$ together with a full level structure at $\{0\}$ and a $B^-$-reduction at $\{\infty\}$. For the basic properties of the thick affine flag variety, we refer to \cite{Ka}.

Similar to the finite situation, the $G(F)$-orbit $U$ of $(I^+,I^-)$ in $X\times Y$ is open dense, parametrizing pairs of ``opposite'' Iwahori and co-Iwahori subgroups in $G(F)$. 

Consider the action of the one-dimensional torus $\GG_m^{\rot}$ on $X$ and $Y$ by loop rotations: $s\cdot g(z)=g(sz)$ for $s\in\GG_m^{\rot}$ and $g(z)\in G(F)$. We denote $\tilT=T\times\GG_m^{\rot}$. This is the torus we are going to consider. Let $\tilIp=I^+\rtimes\GG_m^{\rot}$. It acts on $X,Y$ and $U$.

The $I^+$-orbits (which are the same as the $\tilIp$-orbits) on either $X$ or $Y$ are parametrized by the affine Weyl group $\tilW=\coch(T)\rtimes W$. For $\tilw\in\tilW$, let $X_{\tilw}=I^+\tilw I^+/I^+$ and $Y_{\tilw}=I^+\tilw I^-/I^-$. We have
\begin{equation*}
X=\bigsqcup_{\tilw\in \tilW}X_{\tilw};\hspace{1cm}Y=\bigsqcup_{\tilw\in \tilW}Y_{\tilw}.
\end{equation*}
The affine Weyl group has a partial order such that $\tilw\leq\tilw'$ $\Leftrightarrow$ $X_{\tilw}\subset\overline{X_{\tilw'}}$ $\Leftrightarrow$ $\overline{Y_{\tilw}}\supset Y_{\tilw'}$.

In order to fit into the framework of Section \ref{ss:radon}, we have to do certain truncations to these spaces. Fix $\tilu\in \tilW$. Consider
\begin{equation*}
X_{\leq\tilu}=\bigsqcup_{\tilw\leq\tilu}X_{\tilw};\hspace{1 cm} Y_{\leq\tilu}=\bigsqcup_{\tilw\leq\tilu}Y_{\tilw}.
\end{equation*}
Then $X_{\leq\tilu}$ is a closed (in fact projective) subscheme of $X$, while $Y_{\leq\tilu}$ is an open subscheme of $Y$. Recall that there is a principal congruence subgroup $K\subset G(\calO^+)$ (depending on $\tilu$) which acts freely on $Y_{\leq\tilu}$ and acts trivially on $X_{\leq\tilu}$ (cf. \cite{Ka}). Let $Z$ be the quotient $K\backslash Y_{\leq\tilw}$. We remark that $Z$ is a scheme of finite type which parametrizes $G$-torsor over $\PP^1$ with a $K$-level structure at $\{0\}$ and a $B^-$-reduction at $\{\infty\}$. Since $K$ is normal in $\tilIp$, the group $\tilIp/K$ acts on $Z$ and $X_{\leq\tilu}$, and $Z$ is stratified by finitely many $\tilIp$-orbits $Z_{\tilw}=K\backslash Y_{\tilw}$, for $\tilw\leq\tilu$. Let $U_{\leq\tilu}=U\cap(X_{\leq\tilu}\times Y_{\leq\tilu})$. The diagonal action of $K$ on $U_{\leq\tilu}$ is still free so that we can form the quotient scheme $V_{\leq\tilu}=K\backslash U_{\leq\tilu}$. We now get an $\tilIp/K$-equivariant correspondence
\begin{equation*}
\xymatrix{& V_{\leq\tilu}\ar[dl]_{\overleftarrow{v}}\ar[dr]^{\overrightarrow{v}} & \\ X_{\leq\tilu} & & Z}.
\end{equation*}

We verify the conditions in Section \ref{ss:radon}. 

(a) holds because $X_{\tilw}$ (resp. $Z_{\tilw}$) contracts to $\tilw I^+/I^+$ (resp. $\tilw I^-/I^-$) under the one-parameter subgroup $(2\check{\rho},1+\langle2\check{\rho},\theta\rangle)\in\coch(\tilT)=\coch(T)\oplus\ZZ$.

(b) We first note that there is a natural action $Ad$ of $\tilW$ on $\tilT$:
\begin{equation*}
Ad(\tilw)(t,s)=(Ad(w)t\cdot s^{-Ad(w)\lambda},s),\hspace{.5cm}\tilw=(\lambda,w),(t,s)\in\tilT.
\end{equation*}
It is easy to verify that for $\tilw\in\tilW$ and $\tilt\in\tilT$, we have
\begin{equation*}
Ad(\tilw)(\tilt)\circ\tilw=\tilw\circ\tilt.
\end{equation*}
as left translation actions on $X$ or $Y$. The action $Ad$ also induces an action of $\tilw$ on $\coch(\tilT)\cong\coch(T)\oplus\ZZ$. 
 
Now we verify (b). For each $\tilw\in\tilW$, $\overrightarrow{v}^{-1}(\tilw I^-)=\tilw I^-I^+/I^+$ contracts to $\tilw I^+/I^+$ under the one-parameter subgroup $Ad(\tilw)(-2\check{\rho},-1-\langle2\check{\rho},\theta\rangle)$; $\overleftarrow{v}^{-1}(\tilw I^+)=\tilw I^+I^-/I^-$ contracts to $\tilw I^-/I^-$ under the one-parameter subgroup $Ad(\tilw)(2\check{\rho},1+\langle2\check{\rho},\theta\rangle)$.

(c) follows from the fact that $\dim X_{\tilw}=\ell(\tilw)$ and $\codim_Y Y_{\tilw}=\codim_Z Z_{\tilw}=\ell(\tilw)$.

(d) Both $X_{\tilw}$ and $Z_{\tilw}$ are finite dimensional homogeneous spaces under the unipotent radical of $I^+/K$. They both contain a $k$-point (the unique $T$-fixed point), hence they are isomorphic to affine spaces.

Therefore the results of Section \ref{s:geom} apply to this situation as well. Note that we can choose $\tilu$ large enough for our purposes. 

\subsection{Identification of weight polynomials}
Let $X$ be the affine flag variety as above. According to the remarks following Theorem \ref{th:tcan}, we can speak about {\em the} indecomposable mixed tilting extension $\calT_{\tilw}$ of the constant perverse sheaf $\Ql\Tate{\ell(\tilw)}$ on $X_{\tilw}$ for $\tilw\in\tilW$.  
\begin{theorem}\label{th:KL}
The weight polynomials of $\calT_{\tilw}$ are
\begin{equation}\label{eq:tiltKL}
W_{\tilv}(\calT_{\tilw},t)=t^{\ell(\tilw)-\ell(\tilv)}\cdot P_{\tilv,\tilw}(t^{-2})
\end{equation}
where $P_{\tilv,\tilw}$ are the Kazhdan-Lusztig polynomials for $\tilW$ (cf. \cite{KL}, Theorem 1.1). Moreover, $\calT_{\tilw}$ satisfies the condition \textup{(W)}.
\end{theorem}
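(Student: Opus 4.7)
The plan is to deduce formula (\ref{eq:tiltKL}) from Proposition \ref{p:tp} applied to the Radon correspondence set up in Section \ref{ss:aff}, together with the known identification of the stalks of IC sheaves on affine flag varieties with Kazhdan--Lusztig polynomials.

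First I will verify that the bijection $\tilw \leftrightarrow \hat{\tilw}$ of Section \ref{ss:radon} is the identity on $\tilW$: for each $\tilw \in \tilW$, the subgroups $\tilw I^+ \tilw^{-1}$ and $\tilw I^- \tilw^{-1}$ are opposite Iwahori-type subgroups of $G(F)$, so the pair of $\tilT$-fixed points $(\tilw I^+/I^+, \tilw I^-/I^-)$ lies in $U$. Granting this, the proof of Proposition \ref{p:tp} (via BGG reciprocity) shows that $W_{\tilv}(\calT_{\tilw}, t)$ equals the weighted multiplicity of $\IC_{\tilw}$ in the composition series of $\nabla_{\tilv}$ on $Z$, that is,
\begin{equation*}
[\nabla_{\tilv}^Z] = \sum_{\tilw \geq \tilv} W_{\tilv}(\calT_{\tilw}, t)\, [\IC_{\tilw}^Z] \quad \text{in } K(\pdiam{Z}).
\end{equation*}

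Next I will invoke the geometric Kazhdan--Lusztig theorem for the thick affine flag variety (Kashiwara--Tanisaki), which identifies these weighted multiplicities with the Kazhdan--Lusztig polynomials of $\tilW$:
\begin{equation*}
[\nabla_{\tilv}^Z] = \sum_{\tilw \geq \tilv} t^{\ell(\tilw) - \ell(\tilv)} P_{\tilv,\tilw}(t^{-2})\, [\IC_{\tilw}^Z].
\end{equation*}
This is equivalent, via Verdier duality and the reversal of closure order between $X$ and $Y$, to the standard formula $W_{\tilv}(\IC_{\tilw}^X, t) = t^{\ell(\tilv) - \ell(\tilw)} P_{\tilv, \tilw}(t^2)$ for IC-stalks on the thin affine flag variety; the underlying input is parity vanishing and pointwise purity of the IC sheaves. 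Comparing the two displays yields (\ref{eq:tiltKL}).

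Finally, the Kazhdan--Lusztig degree bound $\deg_q P_{\tilv,\tilw}(q) \le (\ell(\tilw) - \ell(\tilv) - 1)/2$ for $\tilv < \tilw$ forces all terms in $t^{\ell(\tilw)-\ell(\tilv)} P_{\tilv, \tilw}(t^{-2})$ to lie in $t\ZZ[t]$; combined with the Verdier self-duality of $\calT_{\tilw}$ from Proposition \ref{p:can} and Lemma \ref{l:wprime}, this is precisely condition \textup{(W)}. The main obstacle is the second step: checking that the weight polynomials of IC sheaves on the quotient $Z$ of the thick affine flag variety (whose stratification has closure order opposite to that of $X$) correspond to the standard Kazhdan--Lusztig polynomials of $\tilW$ with the normalization fixed in Section \ref{ss:wp}. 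This is a matter of careful bookkeeping of Tate twists and indices, but rests on the well-understood local geometry of Kashiwara's thick flag scheme.
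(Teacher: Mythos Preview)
Your approach is genuinely different from the paper's. The paper argues via Method~I (Section~\ref{ss:cal1}): it observes that the Kazhdan--Lusztig basis $C_{\tilw}$ is bar-invariant, so $\{t^{\ell(\tilw)-\ell(\tilv)}P_{\tilv,\tilw}(t^{-2})\}$ is a self-dual solution of~(\ref{eq:selfdual}); the degree bound $\deg P_{\tilv,\tilw}\le\tfrac12(\ell(\tilw)-\ell(\tilv)-1)$ and positivity show this solution satisfies condition~(W$'$), hence the non-cancellation property. Since $\calT_{\tilw}$ also satisfies non-cancellation by Theorem~\ref{th:tcan}, Proposition~\ref{p:uni} forces the two solutions to agree. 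You instead use Method~III (Proposition~\ref{p:tp}), reducing to a computation of $[\nabla_{\tilv}^Z]$ in the IC basis on the thick side.

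Your first display is correct and follows from the proof of Proposition~\ref{p:tp}. The gap is in the second step. The claim that
\[
[\nabla_{\tilv}^Z]=\sum_{\tilw\ge\tilv}t^{\ell(\tilw)-\ell(\tilv)}P_{\tilv,\tilw}(t^{-2})\,[\IC_{\tilw}^Z]
\]
is ``equivalent, via Verdier duality and the reversal of closure order, to the standard formula $W_{\tilv}(\IC_{\tilw}^X,t)=t^{\ell(\tilv)-\ell(\tilw)}P_{\tilv,\tilw}(t^2)$'' is not a matter of bookkeeping. What Kashiwara--Tanisaki (together with the transversal-slice comparison between thick and thin Schubert varieties) actually gives you is the \emph{stalk} formula on $Z$, namely $W_{\tilv}(\IC_{\tilw}^Z,t)\sim P_{\tilw,\tilv}$ with the indices swapped. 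Passing from $[\IC^Z]$-in-terms-of-$[\Delta^Z]$ to $[\nabla^Z]$-in-terms-of-$[\IC^Z]$ is a matrix inversion, and the assertion that the inverse of the matrix $((-t)^{\ell(\tilw)-\ell(\tilv)}P_{\tilw,\tilv})$ is given by the \emph{same} family $P$ (rather than the inverse Kazhdan--Lusztig polynomials) is exactly the Kazhdan--Lusztig inversion identity. For finite $W$ this can be hidden behind the $w_0$-symmetry, but for $\tilW$ there is no $w_0$ and the inversion formula is a genuinely separate input which you have not invoked. So your argument either needs an explicit appeal to KL inversion, or a direct proof that the transversal-slice isomorphism combined with duality yields your displayed multiplicity formula---neither of which is ``parity vanishing and pointwise purity'' alone.

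The paper's route is cleaner here precisely because Proposition~\ref{p:uni} lets one avoid ever computing on $Z$: one only needs that the KL solution and the tilting solution are both non-cancelling self-dual solutions, and uniqueness does the rest. Your route would work once the inversion step is made explicit, and has the conceptual advantage of explaining \emph{why} tilting multiplicities match KL polynomials (via BGG reciprocity on the thick side), but it does not bypass the combinatorics.
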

\begin{proof}
Note that $K(\diam{X})$ naturally maps to the affine Hecke algebra $\calH_{\tilW}$ of $\tilW$. Recall that Theorem 1.1 of \cite{KL} says that the Kazhdan-Lusztig basis elements $C_{\tilw}$ for $\calH_{\tilW}$ are self-dual and satisfy
\begin{equation*}
(-1)^{\ell(\tilw)}C_{\tilw}=\sum_{\tilv\leq\tilw}t^{\ell(\tilw)-\ell(\tilv)}\cdot P_{\tilv,\tilw}(t^{-2})[\Delta_{\tilw}].
\end{equation*}
Notice that the standard basis used in \textit{loc.cit.} is equal to $(-t)^{\ell(\tilw)}[\Delta_{\tilw}]$.

The Kazhdan-Lusztig conjecture (which is a theorem of Beilinson-Bernstein and Kashiwara-Tanisaki) says that $P_{\tilv,\tilw}$ is a polynomial with non-negative integer coefficients of degree $\leq\dfrac{1}{2}(\ell(\tilw)-\ell(\tilv)-1)$ for $\tilv<\tilw$. Therefore $\{t^{\ell(\tilw)-\ell(\tilv)}\cdot P_{\tilv,\tilw}(t^{-2})\in\ZZ_{\geq0}[t]\}$ is a solution to the self-duality equation (\ref{eq:selfdual}) satisfying the condition (W'), hence the non-cancellation property. The initial value condition is also satisfied since $P_{\tilw,\tilw}=1$. On the other hand, by Theorem \ref{th:tcan}, $\calT_{\tilw}$ has the non-cancellation property. Hence $\{W_{\tilv}(\calT_{\tilw},t)\}$ is also a non-cancellation solution to the self-duality equation with the correct initial value. Therefore, the theorem follows by the uniqueness statement proved in Proposition \ref{p:uni}.
\end{proof}

\begin{cor}
Similar identities hold if $X$ is replaced by the flag variety.
\end{cor}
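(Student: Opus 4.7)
The plan is to copy the proof of Theorem \ref{th:KL} almost verbatim, replacing the affine Weyl group $\tilW$ by the finite Weyl group $W$ and the affine flag variety by $f\ell_G = G/B^+$. The key point is that all the inputs are already in place: Section 5.1 has verified that the Radon transform between $X = G/B^+$ and $Y = G/B^-$ satisfies conditions (a)--(d) of Section \ref{ss:radon}. Consequently, Theorem \ref{th:tcan} applies and gives, for each $w \in W$, an indecomposable mixed tilting extension $\calT_w$ of $\Ql\Tate{\ell(w)}$ on $X_w = B^+ w B^+/B^+$ satisfying the non-cancellation property. Moreover $\calT_w$ is Verdier self-dual by Proposition \ref{p:can}, so the weight polynomials $\{W_v(\calT_w,t)\}_{v \le w}$ satisfy the self-duality equation (\ref{eq:selfdual}) with initial condition $W_w(\calT_w,t) = 1$ and the non-cancellation property.

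Next I would invoke the Kazhdan--Lusztig theorem for the finite Weyl group $W$ (originally conjectured in \cite{KL} and proved by Beilinson--Bernstein and Brylinski--Kashiwara). It states that the Kazhdan--Lusztig basis element
\begin{equation*}
(-1)^{\ell(w)} C_w = \sum_{v \le w} t^{\ell(w)-\ell(v)} P_{v,w}(t^{-2}) \cdot [\Delta_v]
\end{equation*}
is self-dual in the finite Hecke algebra $\calH_W$, and that the $P_{v,w}$ are polynomials in $\ZZ_{\ge 0}[t^2]$ of degree $\le \frac{1}{2}(\ell(w)-\ell(v)-1)$ for $v < w$, with $P_{w,w} = 1$. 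Exactly as in the proof of Theorem \ref{th:KL}, this implies that the family $\{t^{\ell(w)-\ell(v)} P_{v,w}(t^{-2}) \in \ZZ_{\ge 0}[t]\}_{v \le w}$ is a solution of the self-duality equation (\ref{eq:selfdual}) satisfying condition (W'), hence the non-cancellation property, and the correct initial condition.

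Finally, the uniqueness statement of Proposition \ref{p:uni} forces
\begin{equation*}
W_v(\calT_w, t) = t^{\ell(w) - \ell(v)} \cdot P_{v,w}(t^{-2}), \qquad v \le w \in W,
\end{equation*}
and simultaneously shows that $\calT_w$ satisfies condition (W). There is essentially no obstacle: the only verification needed is that the Ringel-duality framework of Section \ref{s:geom} applies to $G/B^\pm$, which was already checked at the start of Section \ref{s:ex}; everything else is a direct transcription of the argument for $\Flag_G$, the finite case being simpler because no truncation by a large $\tilu$ is required.
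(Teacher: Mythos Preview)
Your proposal is correct and follows the first of the two approaches the paper itself suggests: argue independently for $G/B^+$ exactly as in Theorem \ref{th:KL}, using the Radon transform for flag varieties verified in Section 5.1. The paper also notes a shortcut you do not mention---simply restrict the affine identity (\ref{eq:tiltKL}) to elements $v,w\in W$, using that the finite $P_{v,w}$ agree with the affine ones for $v,w\in W\subset\tilW$---but your direct argument is complete on its own.
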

\begin{proof}
This can either be proved independently by using the Radon transform for the flag varieties (argue as above), or by restricting the equation (\ref{eq:tiltKL}) to elements $v,w\in W$. 
\end{proof}

\subsection{Partial flag and affine partial flag varieties}

The Radon transforms also exist for partial flag and affine partial flag varieties. Although we do not actually need it to compute the weights of mixed tilting sheaves, we nevertheless sketch the construction in the affine case. Let $\tau$ be the Chevalley involution of $G$ which sends the root space corresponding to a root $\alpha$ to the root space corresponding to $-\alpha$. Let $\sigma$ be the involution of $G(k[z,z^{-1}])$ defined by $g(z)\mapsto \tau(g(z^{-1}))$. Then $\sigma$ sends the root space corresponding to an affine root $\tilalpha$ to the root space corresponding to $-\tilalpha$. Let $P^+$ be a parahoric subgroup of $G(F)$ containing $I^+$. Let $P^-:=\sigma(P^+\cap G(k[z,z^{-1}]))$. Let $X=G(F)/P^+$ be the affine partial flag variety (ind-scheme locally of finite type) and $Y=G(F)/P^-$ be the thick affine partial flag variety (scheme of infinite type). The $G(F)$-orbit $U$ of the point $(P^+/P^+,P^-/P^-)\in X\times Y$ is dense open. We view $U$ as an $I^+$-equivariant correspondence between $X$ and $Y$. Then the truncation construction in Section \ref{ss:aff} has an obvious analogue here and we can similarly check the conditions (a)-(d). As a consequence, the results of Section \ref{s:geom} apply to mixed tilting sheaves on $X$. In particular, we can speak about {\em the} indecomposable mixed tilting extension of the constant perverse sheaf on some stratum $X_{\tilw}$, where $\tilw\in\tilW/\tilW_{P^+}$. 

\begin{cor}[of Theorem \ref{th:KL}]
The mixed tilting sheaf $\calT_{\tilw}$ on the affine partial flag variety $G(F)/P^+$ satisfies the weight condition \textup{(W)}.
\end{cor}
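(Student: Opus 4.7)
The plan is to prove the corollary by transporting condition \textup{(W)} from the affine flag variety to the partial affine flag variety via the pushforward technique of Section \ref{ss:wpush}, applied to the natural projection $\pi\colon G(F)/I^+ \to G(F)/P^+$ (with truncations as in Section \ref{ss:aff}).

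First I verify that $\pi$ fits the hypotheses of Proposition \ref{p:wpush}. The map $\pi$ is proper, with fibers isomorphic to the finite partial flag variety $P^+/I^+$, and is compatible with the Schubert stratifications via the natural projection $\phi\colon \tilW \to \tilW/\tilW_{P^+}$. The restriction of $\pi$ to each Iwahori stratum $X_{\tilu}^{I^+}$ is a trivial fibration over the parahoric stratum $X_{[\tilu]}^{P^+}$, with fiber the Bruhat cell of $P^+/I^+$ corresponding to the position of $\tilu$ inside its coset. In particular, when $\tilw \in \tilW$ is the shortest representative of its coset, $\pi|_{X_{\tilw}^{I^+}}$ is an isomorphism onto $X_{\tilw}^{P^+}$, while for longer representatives the fiber is a positive-dimensional affine space.

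By Theorem \ref{th:KL}, the indecomposable mixed tilting sheaf $\calT_{\tilw}^{I^+}$ on $G(F)/I^+$ satisfies condition \textup{(W)}. Applying Proposition \ref{p:wpush}(1) to $\tilw$ the shortest representative of its coset, I obtain that $\pi_!\calT_{\tilw}^{I^+}$ is a mixed tilting extension of $\Ql\Tate{\ell(\tilw)}$ on $X_{\tilw}^{P^+}$ also satisfying \textup{(W)}. To finish, I identify this pushforward with the indecomposable mixed tilting sheaf $\calT_{\tilw}$ on $G(F)/P^+$: the underlying non-mixed sheaf $\omega(\pi_!\calT_{\tilw}^{I^+})$ is indecomposable in $\pdiac{G(F)/P^+}$, since the pushforward of the indecomposable tilting sheaf along this flag bundle is the indecomposable tilting sheaf on the partial flag variety (a standard Soergel-theoretic fact for shortest representatives). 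Combined with \textup{(W)} (which implies the weak non-cancellation property for $\pi_!\calT_{\tilw}^{I^+}$), Proposition \ref{p:can} forces $\calT_{\tilw} \cong \pi_!\calT_{\tilw}^{I^+}$, so $\calT_{\tilw}$ inherits condition \textup{(W)}.

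The main obstacle is the non-mixed indecomposability of $\omega(\pi_!\calT_{\tilw}^{I^+})$. An alternative that avoids proving it directly proceeds by induction on $\tilw$ in the Bruhat order on $\tilW/\tilW_{P^+}$: the base case is vacuous; for the inductive step, one uses the inductive hypothesis together with the uniqueness statement of Proposition \ref{p:can} to lift the non-mixed Krull--Schmidt decomposition of $\omega(\pi_!\calT_{\tilw}^{I^+})$ to a mixed direct sum in which $\calT_{\tilw}$ appears as a summand, whereupon \textup{(W)} is inherited by inspection of each boundary stratum.
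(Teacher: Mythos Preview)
Your approach is correct and essentially identical to the paper's: both apply Proposition~\ref{p:wpush}(1) to the projection from the affine flag variety to the affine partial flag variety, with $\tilw$ the minimal-length representative of its coset, and conclude that $\pi_!\calT_{\tilw}^{I^+}$ is the mixed tilting extension on $G(F)/P^+$ satisfying~(W).

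Your added concern about identifying $\pi_!\calT_{\tilw}^{I^+}$ with the indecomposable $\calT_{\tilw}$ is unnecessary, and the detours you propose (the Soergel-theoretic fact, or the Bruhat induction) are not needed. Condition~(W) by itself already forces non-mixed indecomposability: for every boundary stratum $X_\gamma$, the canonical map $i_\gamma^!\calT\to i_\gamma^*\calT$ goes from an object of weight~$\leq -1$ to one of weight~$\geq 1$ and is therefore zero; exactly as in the final paragraph of the proof of Lemma~\ref{l:exist}, a nontrivial direct summand of $\omega(\calT)$ supported on the boundary would contribute an identity map as a summand of some $i_\gamma^!\omega(\calT)\to i_\gamma^*\omega(\calT)$, contradicting this vanishing. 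Hence $\omega(\pi_!\calT_{\tilw}^{I^+})$ is indecomposable, and Proposition~\ref{p:can} (or simply the Remark following condition~(W)) gives the identification $\pi_!\calT_{\tilw}^{I^+}\cong\calT_{\tilw}$ immediately. The paper accordingly writes $f_!\calT_{\tilu}=\calT_{\tilw}$ with no further comment.
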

\begin{proof}
Choose a lift $\tilu$ of $\tilw$ in $\tilW$ such that $\ell(\tilu)$ is minimal in the coset $\tilu\tilW_{P^+}$. Consider the projection
\begin{equation*}
f:(G(F)/I^+)_{\leq\tilu}\to(G(F)/P^+)_{\leq\tilw}.
\end{equation*}
It is easy to verify the conditions in Section \ref{ss:wpush}, hence Proposition \ref{p:wpush} applies. In particular, $f_!\calT_{\tilu}=\calT_{\tilw}$ satisfies the condition (W).
\end{proof}

Similar statements for partial flag varieties $G/P$ also hold. To explicitly calculate the weight polynomials in these situations, we can either use push-forward from (affine) flag varieties (Proposition \ref{p:wpush}) or the inverse matrix method (Proposition \ref{p:tp}).

\subsection*{Acknowledgment}
The author is grateful to R.Bezrukavnikov for the important observation that the push-forward of a tilting sheaf is usually a tilting sheaf. He also thanks M.Goresky, R.MacPherson, D.Nadler, J.Suh, D.Treumann and X.Zhu for encouragement and helpful suggestions.

\end{document}